\theoremstyle{plain}
\newtheorem{theorem}{Theorem}[section]
\newtheorem{lemma}[theorem]{Lemma}
\newtheorem{proposition}[theorem]{Proposition}
\newtheorem{corollary}[theorem]{Corollary}
\newtheorem{definition}[theorem]{Definition}
\numberwithin{equation}{section}
\begin{document}

\title[Two variable Freud OP and matrix Painlev\'e-type difference equation]{Two variable Freud orthogonal polynomials and matrix Painlev\'e-type difference equations}

\author[C. F. Bracciali, G. S. Costa,  T. E. P\'{e}rez]
{Cleonice F. Bracciali, Glalco S. Costa, Teresa E. P\'{e}rez}

\thanks{[C.~F.~Bracciali] Departamento de Matem\'{a}tica, IBILCE, UNESP - Universidade Estadual Paulista,
15054-000, S\~ao Jos\'e do Rio Preto, SP, Brazil. 
E-mail: cleonice.bracciali@unesp.br}

\thanks{[G.~S.~Costa] Departamento de Matem\'{a}tica, Instituto de Ci\^encias Tecnol\'ogicas e Exatas 
- ICTE, Universidade Federal do Tri\^{a}ngulo Mineiro - UFTM,
38025-180, Uberaba, MG, Brazil. 
E-mail: glalco.costa@uftm.edu.br}

\thanks{[T.~E.~P\'{e}rez] Instituto de Matem\'{a}ticas IMAG \&
Departamento de Matem\'{a}tica Aplicada, Facultad de Ciencias. Universidad de Granada. 18071. Granada, Spain. 
E-mail: tperez@ugr.es}

\thanks{%
This work was supported through the Brazilian Federal Agency for Support and Evaluation of Graduate Education (CAPES), in the scope of the Program CAPES-PrInt, process number 88887.310463/2018-00, International Cooperation Project number 88887.575407/2020-00}

\thanks{%
Third author (TEP) thanks FEDER/Junta de Andaluc\'ia A-FQM-246-UGR20; PGC2018-094932-B-I00 supported by MCIN/AEI 10.13039/501100011033 and FEDER funds, and IMAG-Mar\'ia de Maeztu grant CEX2020-001105-M}

\date{\today}

\begin{abstract}

We study bivariate orthogonal polynomials associated with Freud weight functions depending on real parameters. We analyse relations between the matrix coefficients of the three term relations for the orthonormal polynomials as well as the coefficients of the structure relations satisfied by these bivariate semiclassical orthogonal polynomials, also a matrix differential-difference equation for the bivariate orthogonal polynomials is deduced. The extension of the Painlev\'e equation for the coefficients of the three term relations to the bivariate case and a two dimensional version of the Langmuir lattice are obtained.

\end{abstract}

\subjclass[2010]{Primary: 42C05; 33C50}

\keywords{Bivariate orthogonal polynomials, 
Freud orthogonal polynomials, 
Three term relations, 
Matrix Painlev\'e-type difference equations}

\maketitle

\section{Introduction} 

The study of orthogonal polynomials with respect to the generalized weight function $|x|^\rho $ $\exp(-|x|^m)$, $\rho >-1, $ $m>0$, began with G\'{e}za Freud, see for example \cite{Fr76}. We refer to \cite{CJ18} for a interesting historic summary about the studies of generalized Freud polynomials.  

A symmetric Freud weight function in one variable is usually given by
\begin{equation*} 
w_{t}(x)=e^{-x^4+tx^2}, 
\end{equation*}
for $x \in \mathbb{R}$, and $t\in \mathbb{R}$ is consider as a time parameter. The corresponding moments exist and depend on $t$ as
\begin{equation*}
\mu_k(t) = \int_{-\infty}^{+\infty} x^k e^{-x^4 + tx^2} dx, \quad k=0,1,\ldots .
\end{equation*}

Therefore, the sequence of orthonormal polynomials with respect to $w_{t}(x)$ is a sequence of polynomials on the variable $x$ whose coefficients depend on $t$, that we denote $\{p_n(x,t)\}_{n\geqslant 0}$, and satisfies the three term recurrence relation in the form
$$
 x\,p_n(x,t) = a_{n}(t)p_{n+1}(x,t) + a_{n-1}(t) p_{n-1}(x,t), \quad n \geqslant 0, 
$$ 
with $p_{-1}(x,t)=0$ and $p_{0}(x,t)=\mu_{0}(t)^{-1/2}$.

It is well known that the coefficients $ a_{n}(t)$ satisfy the difference equation 
\begin{equation} \label{dif_eq_1_var}
4\,a_{n}^2(t) [a_{n+1}^2(t) + a_{n}^2(t) + a_{n-1}^2(t)] -  2\, t \,a_{n}^2(t) = n+1   , \quad n \geqslant 0,
\end{equation}
where $a_0^{2}(t) =\mu_2(t)/\mu_0(t)$ and $a_{-1}(t) =0$ (see, for instance, \cite{BR94, Ma86, Mag99, Va08}). 

Also well known is the fact that the difference equation \eqref{dif_eq_1_var} coincides with the discrete Painlev\'{e} equation dPI
\begin{equation*} 
x_n (x_{n+1} + x_n + x_{n-1}) -  \delta\,x_n = \alpha\, n +\beta +(-1)^n \,\gamma, 
\end{equation*}
with $x_n = a^2_{n}(t), \alpha =  \beta = 1/4, \gamma =0, \delta = t/2.$ See more about relations between orthogonal polynomials and Painlev\'{e} equations in \cite{Va08} and the references therein.

If we consider the sequence of monic orthogonal polynomials associated with $w_{t}(x)$, $\{q_n(x,t)\}_{n\geqslant 0}$, again a sequence of polynomials in the variable $x$ and whose coefficients depend on $t$, it satisfies 
$$
 x \,q_n(x,t) =  q_{n+1}(x,t) + \beta_{n}(t) q_{n-1}(x,t), \quad n \geqslant 0, 
$$
with $q_{-1}(x,t)=0$,  $q_{0}(x,t)=1$ and $\beta_{n}(t) = a_{n-1}^{2}(t)$. The coefficients $\beta_{n}(t)$ satisfy the Langmuir lattice (or Volterra lattice)
\begin{equation} \label{Lang_one_v}
  \dot{\beta}_{n}(t) = \beta_{n}(t) [\beta_{n+1}(t)-\beta_{n-1}(t)], \quad 
 n \geqslant 0,
\end{equation}
where, as usual, $ \dot{\beta}_{n}(t) = \dfrac{d}{dt} \beta_{n}(t)$, see \cite{Pe01}.

Consequently, the Langmuir lattice in terms of $a_{n}(t)$ is
$$
 \dot{a}_{n}(t) = \frac{a_{n}(t)}{2} [a_{n+1}^2(t)-a_{n-1}^2(t)], \quad n \geqslant 1. 
$$

\medskip
 
The connection between the coefficients of the three term recurrence relation for orthogonal polynomials in one variable and Painlev\'{e} equations  (\cite{Va08}), Langmuir or Toda lattices (\cite{Pe01}) is very well known. 
A fundamental paper regarding discrete Painlev\'e I and Laguerre-Freud equations is \cite{Mag99}. 
The motivation of this manuscript is to analyse extensions of  the equation dPI, showing that
the matrix coefficients of three term relations of two variable Freud orthogonal polynomials satisfy some matrix difference equations, that we call matrix Painlev\'e-type difference equations, and also to present two dimensional version of the Langmuir lattices. There are previous papers dealing with the extension of such systems to the matrix realm.  In \cite{Cas12,Gru11} the matrix extension of dPI was first derived using the Riemann-Hilbert problem for the theory of matrix orthogonal polynomials. This has been extended further to alt-dPI, dPII and dPIV, see \cite{Bra21,Bra21_2,Bra22,Cas19}. Matrix Painlev\'e systems have been also studied in \cite{Caf14,Caf18}. Confinement of singularities is a very interesting property for non-linear discrete system derived within orthogonal polynomial
theory (\cite{Mas19,Ram91}), for its application for matrix dPI see \cite{Cas14}.

 As it is well known, the study of bivariate orthogonal polynomials is not developed as deeply as in the univariate case. The first difficulty lies in the fact that there is no unique orthogonal system, due to the fact that several orderings of the bivariate monomials are possible. Therefore, it is necessary to fix an order on the monomials, to choose a representation for the polynomials and develop the theory. In this paper, we use the vector representation for polynomials in two variables introduced in \cite{Ko1}, \cite{Ko2}, and developed in \cite{Xu93}. There, the graded lexicographical order is used, and the representation of the polynomials as vectors whose entries are independent polynomials of the same total degree is introduced. 
However, the size of these vectors and the corresponding coefficient matrices of the formulas are increasing with the degree,  on the contrary to the non-matrix case, where the size is constant.  

In \cite{Sue99}, the vector representation for general families of bivariate orthogonal polynomials is not used, but main properties as three term relations for the orthogonal polynomials appear in a non-matrix formulation. Despite to the fact that the vector-matrix representation apparently adding more complexity to the problem, the vector representation of the families of orthogonal polynomials and the vector-matrix formulation of the three term relations, that first appeared in \cite{Ko2}, has proven to be a very powerful tool when formulating results in the bivariate environment, simplifying the notations. Now, the involved coefficients are, in general, rectangular matrices of increasing size. Nevertheless, the vector-matrix notation must be interpreted as a compact form to express properties that could be write in another form, as, for instance, in \cite{Sue99}. 

The aim of this paper is to investigate the symmetric bivariate Freud weight function given by
$$
W(x,y) =e^{-q(x,y)},\qquad (x,y) \in \mathbb{R}^2,
$$
where 
$$
q(x,y) = a_{4,0} \, x^4 + a_{2,2} \, x^2 \, y^2 + a_{0,4}\, y^4 + a_{2,0} \, x^2 + a_{0,2} \, y^2
$$
and $a_{i,j}$ are real parameters.  We analyse the bivariate orthonormal polynomials with respect to $W(x,y)$ by using, as the main tool, the vector representation for the families of orthogonal polynomials. In this environment, we can formulate the main properties in a vector-matrix form, deducing and writing the properties in a friendly form extending the results in one variable to the bivariate case.

We extend the difference equation \eqref{dif_eq_1_var} for the matrix coefficients of the three term relations for these polynomials when $a_{2,0} = a_{0,2} = - t$, getting matrix Painlev\'e-type difference equations for the respective coefficients.  We also present 2D Langmuir lattices for the matrix coefficients of the three term relations satisfied by the orthogonal polynomial systems associated with $W(x,y)$, where $a_{2,0} = a_{0,2} = - t$, $t \in \mathbb{R}$. Furthermore, matrix differential-difference equations are provided for the orthogonal polynomial systems.

This paper is structured as follows. In Section \ref{sec_basic_tools} we recall the basic results about bivariate polynomials in vector-matrix representation that we need along the paper. 

In Section \ref{sec_bi_Fr_we_fun} we present the Freud inner product associated with the bivariate Freud weight function that is considered in this work. 
The three term relations satisfied by the bivariate orthonormal polynomials and the involved matrix coefficients are  given. The structure relations as well as a differential-difference equation satisfied by the orthonormal polynomials system are also presented. These structure relations are related to the matrix Pearson-type equation satisfied by the bivariate Freud weight function. 

In Section \ref{sec_some_results}, relations for the coefficients of the three term relations and  for the coefficients of the structure relations for orthonormal polynomials are presented. We also give non-linear four term relations for the coefficients of the three term relations for orthonormal polynomials.

In Section \ref{sec_Freud_type_} we present the main results, that are the matrix Painlev\'e-type difference equations for the coefficients of the three term relations of the orthonormal polynomial system. They are 
extensions for two variables for the difference equation \eqref{dif_eq_1_var}, see Theorem \ref{Theo_TTR_A}. 

Furthermore, considering the Freud weight function
$W(x,y) = e^{-q(x,y)}$, with 
$q(x,y)={a_{4,0}x^4 + a_{2,2}x^2y^2 + a_{0,4}y^4 - t (x^2 + y^2)},$
depending on the real parameter $t$, 2D Langmuir lattices for the coefficients of the three term relations are given in Section \ref{sec_Langmuir_lattice}.

\medskip

\section{Basic tools} 
\label{sec_basic_tools}

We start introducing the basic definitions and main tools that we will need along the paper. We refer mainly \cite{DX14}.

Let us consider the linear space of real polynomials in two variables $x$ and $y$
$$
\Pi = \mathrm{span} \langle x^h\,y^k: h, k \geqslant 0\rangle,
$$
and we define the linear space
$$
\Pi_n = \mathrm{span} \langle x^h\,y^k: h+ k \leqslant n\rangle,
$$
of finite dimension $(n+1)(n+2)/2$. Observe that $\cup_{n\geqslant 0} \Pi_n = \Pi.$

As usual, a two variable polynomial of (total) degree $n$, i.e., $p(x,y)\in \Pi_n$, is given by
$$
p(x,y) = \sum_{h+k\leqslant n} c_{h,k}\, x^h \, y^k, \quad c_{h,k}\in \mathbb{R}.
$$
Now we define the vector representation for bivariate polynomials introduced in \cite{Ko1}, \cite{Ko2}, and developed in \cite{Xu93}, by using the graded lexicographical order. Notice that the size of the vectors is increasing with the degree.

\begin{definition}
A \emph{polynomial system (PS)} 
is a sequence of vectors of polynomials $\{\mathbb{P}_n\}_{n\geqslant0}$ of increasing size $(n+1)$  
$$
\mathbb{P}_n = (P_{n,0}(x,y), P_{n,1}(x,y), \ldots, P_{n,n}(x,y))^T,
$$
such that every bivariate polynomial $P_{n,i}(x,y)$ has exactly degree $n$ and the set $\{P_{n,0}(x,y)$, $P_{n,1}(x,y)$, $\ldots$, $P_{n,n}(x,y)\}$ is linearly independent.
\end{definition}
Observe that $\{\mathbb{P}_m\}_{m=0}^n$ contains a basis of $\Pi_n$, and, by extension, we will say that $\{\mathbb{P}_m\}_{m=0}^n$ is a basis of $\Pi_n$.

The simplest PS is the so-called {\it canonical basis} $\{\mathbb{X}_n\}_{n\geqslant0}$, defined as 
$$
\mathbb{X}_n = (x^n, x^{n-1}\,y, x^{n-2}\,y^2, \ldots, x\,y^{n-1}, y^{n})^T.
$$
Following \cite{DX14}, observe that
\begin{equation} \label{xXLX}
x\,\mathbb{X}_n = x\,\begin{pmatrix}
x^n\\
x^{n-1}\, y\\
x^{n-2}\, y^2\\
\vdots\\
x\, y^{n-1}\\
y^n
\end{pmatrix} = \begin{pmatrix}
x^{n+1}\\
x^{n}\, y\\
x^{n-1}\, y^2\\
\vdots\\
x^2\, y^{n-1}\\
x\,y^n
\end{pmatrix} = L_{n,1}\,\mathbb{X}_{n+1},
\end{equation}
for $n\geqslant 0$, analogously, $y\,\mathbb{X}_n = L_{n,2}\,\mathbb{X}_{n+1}$, where $L_{n,1}$ and $L_{n,2}$ are $(n+1)\times (n+2)$ matrices given by
\begin{equation} \label{L1L2}
L_{n,1} = \left(\begin{array}{ccc|c}
1        &        & \bigcirc & 0     \\
         & \ddots &          & \vdots \\
\bigcirc &        & 1        & 0 
\end{array}\right)
\quad \mbox{and} \quad 
L_{n,2} = \left(\begin{array}{c|ccc}
0      & 1        &        & \bigcirc \\
\vdots &          & \ddots &          \\
0      & \bigcirc &        & 1         
\end{array}\right),
\end{equation}
where the symbol $\bigcirc$ represents a triangle of zero elements of adequate size. This notation will be used along this work.
Observe that $L_{n,i}$ are full rank matrices,
such that $L_{n,i}\,L_{n,i}^T = I_{n+1}$.

We can write 
\begin{equation}\label{partial_x}
\partial_x\,\mathbb{X}_n = \partial_x\,\begin{pmatrix}
x^n\\
x^{n-1}\, y\\
x^{n-2}\, y^2\\
\vdots\\
x\, y^{n-1}\\
y^n
\end{pmatrix} = \begin{pmatrix}
n\,x^{n-1}\\
(n-1)\,x^{n-2}\, y\\
(n-2)\,x^{n-3}\, y^2\\
\vdots\\
y^{n-1}\\
0
\end{pmatrix} = L_{n-1,1}^T\,N_{n,1}\,\mathbb{X}_{n-1},
\end{equation}
moreover, $\partial_y\,\mathbb{X}_n = L_{n-1,2}^T\,N_{n,2}\,\mathbb{X}_{n-1}$, where 
\begin{equation}\label{N_n}
N_{n,1} = \begin{pmatrix}
n        &     &        & \bigcirc \\
         & n-1 &        &          \\
         &     & \ddots &         \\
\bigcirc &     &        & 1       
\end{pmatrix} 
\quad \mbox{and} \quad 
N_{n,2} = \begin{pmatrix}
1        &   &        & \bigcirc \\
         & 2 &        &          \\
         &   & \ddots &         \\
\bigcirc &   &        & n       
\end{pmatrix}.
\end{equation}

Let $\{\mathbb{P}_n\}_{n\geqslant0}$ be a PS. There exist matrices of constants $G^n_k$ of respective sizes $(n+1)\times (k+1)$ such that every vector polynomial $\mathbb{P}_n$ can be express in terms of the canonical basis  
\begin{equation*} 
\mathbb{P}_n = G_n \,\mathbb{X}_n + G_{n-1}^{n} \,\mathbb{X}_{n-1} +
 G_{n-2}^{n}\, \mathbb{X}_{n-2} + \cdots  + G_{1}^{n}\, \mathbb{X}_{1}  + G_{0}^{n} \, \mathbb{X}_{0},
\end{equation*}
where  $G_n$ is a $(n+1)$ non-singular matrix, because the independence of the entries of $\mathbb{P}_n$ and $\mathbb{X}_n$. We use the convention $G_{m}^n = \mathtt{0}$, for $m>n$ and $m<0$.

\medskip

\section{Bivariate Freud weight functions}
\label{sec_bi_Fr_we_fun}

We work with a bivariate Freud weight function in the form 
\begin{equation} \label{wf}
W(x,y) =e^{-q(x,y)},\quad (x,y) \in \mathbb{R}^2,
\end{equation}
where 
\begin{equation}\label{q(x,y)}
q(x,y) = a_{4,0} \, x^4 + a_{2,2} \, x^2 \, y^2 + a_{0,4}\, y^4 + a_{2,0} \, x^2 + a_{0,2} \, y^2,
\end{equation}
is a bivariate polynomial of degree 4, such that the coefficients $a_{4,0}, a_{2,2}, a_{0,4} \geqslant 0$, and $a_{2,0}, a_{0,2} \in \mathbb{R}$, with
$a_{4,0} + a_{2,2} > 0$ and  $a_{2,2} + a_{0,4} > 0$. 

Observe that $q(- x,- y) = q(x,y)$, for $(x,y) \in \mathbb{R}^2$, that is, $q(x,y)$ is an even function, and, as consequence, 
$W(-x,-y) = W(x,y)$. Following \cite[p.~76]{DX14}, the bivariate Freud weight function $W(x,y)$ is centrally symmetric. 

We define the bivariate Freud moment functional as
\begin{equation*} 
\langle \mathbf{u}, f \rangle = \iint_{-\infty}^{+\infty} f(x,y)\, W(x,y)\, dx\,dy,
\end{equation*}
and its associated moments as
$$
\mu_{n,m} =\langle \mathbf{u}, x^n\,y^m \rangle = \iint_{-\infty}^{+\infty} x^n\,y^m\, W(x,y)\, dx\,dy < +\infty,
$$
for $n, m = 0, 1, 2, \ldots$. Since $\mathbf{u}$ is centrally symmetric, then, for $n+m$ odd, we get
$$
\mu_{n,m} =\langle \mathbf{u}, x^n\,y^m \rangle= 0.
$$
Furthermore, since the special shape of the weight function, the moments such that $n$ or $m$ is odd are zero, that is,
$$
\mu_{n,m} =0, \quad \mathrm{for} \ n \ \mathrm{or} \ m \ \mathrm{odd}.
$$

Thus, we will consider the inner product
\begin{equation}\label{ip}
( f, g) := \langle \mathbf{u}, f\,g\rangle = \iint_{-\infty}^{+\infty} f(x,y)\,g(x,y)\,W(x,y)\, dx\,dy.
\end{equation}

\subsection{Orthonormal Polynomial Systems}

Let $\{\mathbb{P}_n\}_{n\geqslant0}$ be a polynomial system satisfying
\begin{align*}
(\mathbb{P}_n, \mathbb{P}_n^T) = \langle \mathbf{u},\mathbb{P}_n\,\mathbb{P}_n^T  \rangle &= I_{n+1},\\
(\mathbb{P}_n, \mathbb{P}_m^T) = \langle \mathbf{u},\mathbb{P}_n\,\mathbb{P}_m^T  \rangle &= \mathtt{0},
\end{align*}
where $\mathtt{0}$ is the zero matrix of adequate size. We say that $\{\mathbb{P}_n\}_{n\geqslant0}$ is an {\it orthonormal polynomial system} associated with the Freud inner product \eqref{ip}. 

Since the inner product \eqref{ip} is centrally symmetric, every vector of polynomials $\mathbb{P}_n$ reduces to
\begin{equation}\label{expl_expr}
\mathbb{P}_n = G_n \,\mathbb{X}_n + G_{n-2}^{n}\, \mathbb{X}_{n-2} + G_{n-4}^{n}\, \mathbb{X}_{n-4} + \cdots,
\end{equation}
that is, $\mathbb{P}_n$ contains only even powers if $n$ is even, or odd powers in the case of $n$ odd. The 
matrices $G^n_k$ are of order $(n+1)\times (k+1)$ and  $G_n$ is a matrix of order $(n+1)\times (n+1)$.

\subsection{Three term relations}  

Since $W(x,y)$ is an even function, the three term relations for the orthonormal polynomial system  $\{\mathbb{P}_n\}_{n\geqslant 0}$ takes the form (\cite[p. 77]{DX14}),
\begin{equation}\label{TTR-O}
\begin{aligned}
x \,\mathbb{P}_n &= A_{n,1}\,\mathbb{P}_{n+1} + A_{n-1,1}^{T}\,\mathbb{P}_{n-1},  \\
y \,\mathbb{P}_n &= A_{n,2}\,\mathbb{P}_{n+1} + A_{n-1,2}^{T}\,\mathbb{P}_{n-1}, 
\end{aligned}
\end{equation}
for $n \geqslant 0$, where $\mathbb{P}_{-1} =0$, $\mathbb{P}_0 = \mu_{0,0}^{-1/2}$, and  $A_{n,i}$, for $i=1,2$, are full rank $(n+1)\times(n+2)$ matrices. 
Observe that the $2(n+1)\times (n+2)$ joint matrix
\begin{align} \label{joint_An}
A_n = \left(\begin{array}{c}
      A_{n,1}\\
      A_{n,2}
      \end{array}\right)
\end{align}
is also a full rank matrix.

Computing directly, we get the initial terms
$$
A_{0,1} = \left(\sqrt{\frac{\mu_{2,0}}{\mu_{0,0}}}, 0\right), \quad 
A_{0,2} = \left(0,\sqrt{\frac{\mu_{0,2}}{\mu_{0,0}}}\right),
$$
since $\mathbb{P}_1 = (\mu_{2,0}^{-1/2} x, \mu_{0,2}^{-1/2} y)^T$. In this way, the leading coefficient matrices of $\mathbb{P}_0$ and $\mathbb{P}_1$ are respectively given by
$$
G_0 = \mu_{0,0}^{-1/2}, \qquad G_1 = \begin{pmatrix}
\mu_{2,0}^{-1/2} & 0 \\
0 & \mu_{0,2}^{-1/2}
\end{pmatrix}.
$$

\subsection{Pearson matrix equation for the Freud weight function}   

A direct computation on $W(x,y)$, given by \eqref{wf} and \eqref{q(x,y)},  shows that
\begin{equation}\label{Pearson} 
\begin{aligned}
\partial_x  W(x,y) &= -(4 \, a_{4,0} \, x^3 + 2 \, a_{2,2}\, x \, y^2 + 2 \, a_{2,0}\, x) \, W(x,y),\\[1ex]
\partial_y  W(x,y) &= -(2 \, a_{2,2} \, x^2 \, y + 4 \,a_{0,4} \, y^3 + 2 \, a_{0,2}\, y) \, W(x,y).
\end{aligned} 
\end{equation}

Given $M_1, M_2$, matrices of polynomials of the same order,
the divergence operator for the join matrix is defined by
\begin{equation*}
	\mathrm{div}\left(\begin{array}{c}
		M_1\\
		M_2
	\end{array}\right) = \partial_x M_1 + \partial_y M_2,
\end{equation*}
hence, we can state that
the weight function \eqref{wf} satisfies the bivariate Pearson equation
$$
\mathrm{div}(\Phi \, W(x,y) ) = \Psi^{T} \, W(x,y), 
$$
where
\[
\Phi = \begin{pmatrix}
1 & 0 \\
0 & 1 
\end{pmatrix},  \quad  \quad
\Psi = \begin{pmatrix} \psi_1\\ \psi_2 \end{pmatrix},
\]  
\begin{align*}
\psi_1 = \psi_1(x,y) &= -(4 \, a_{4,0} \, x^3 + 2 \, a_{2,2}\, x \, y^2 + 2 \, a_{2,0}\, x), \\
\psi_2 = \psi_1(x,y) &= -(2 \, a_{2,2} \, x^2 \, y + 4 \,a_{0,4} \, y^3 + 2 \, a_{0,2}\, y).
\end{align*}
Observe that $\deg \psi_1 = \deg\psi_2 = 3$.

\subsection{Structure relation and difference-differential equation}

Now using the fact that the weight function  \eqref{wf} is centrally symmetric, and the Pearson equations  \eqref{Pearson} for the weight function, we know that the orthonormal polynomial system, $\{\mathbb{P}_n\}_{n\geqslant0}$, (see \cite{AFPP07}), satisfies the following structure relations
\begin{equation}\label{rel_Est}
\begin{aligned} 
\partial_x \, \mathbb{P}_n &= B_{n,1} \,\mathbb{P}_{n-1} + C_{n,1} \,\mathbb{P}_{n-3},  \\[1ex]
\partial_y \, \mathbb{P}_n &= B_{n,2} \,\mathbb{P}_{n-1} + C_{n,2} \,\mathbb{P}_{n-3}, 
\end{aligned}
\end{equation}
 for $n\geqslant1$, where $\mathbb{P}_{-2} = \mathbb{P}_{-1} =0$, $B_{n,i}$, $C_{n,i}$ are matrices of respective sizes $(n+1) \times n$ and $(n+1) \times (n-2)$, and $C_{1,i} = C_{2,i}=0$, for $i=1,2$.

\medskip 

Following \cite{AFPP08}, since the Freud weight function \eqref{wf} is semiclassical, there exists a second order partial differential functional 
$$
\mathcal{F} = \partial_{xx} + \partial_{yy} + \psi_1\, \partial_x + \psi_2\,\partial_y
$$
such that 
\begin{equation}\label{dde}
\mathcal{F}\,\mathbb{P}_n
= \Lambda^n_{n+2}\,\mathbb{P}_{n+2} + \Lambda^n_{n}\,\mathbb{P}_{n} + \Lambda^n_{n-2}\,\mathbb{P}_{n-2},
\end{equation}
for $n\geqslant1$, where  
\begin{align*}
\Lambda^n_{n+2} & = - [B_{n,1}	\, C_{n+2,1}^T + B_{n,2}	\, C_{n+2,2}^T ], \\
\Lambda^n_{n} & = - [ B_{n,1}B_{n,1}^T + C_{n,1}C_{n,1}^T + B_{n,2}B_{n,2}^T + C_{n,2}C_{n,2}^T ], \\
\Lambda^n_{n-2} & =- [ C_{n,1}B_{n-2,1}^T + C_{n,2} B_{n-2,2}^T ] = (\Lambda^{n-2}_{n})^T,
\end{align*}
that is, the orthonormal polynomial system $\{\mathbb{P}_n\}_{n\geqslant0}$ satisfies the matrix partial-differential-difference equation \eqref{dde}.

\medskip

\section{Results involving the matrix coefficients} 
\label{sec_some_results}

In this section we show several relations between the matrix coefficients of the three term relations  for orthonormal polynomials \eqref{TTR-O}, the matrix coefficients of the structure relations \eqref{rel_Est} and the matrices involved in the explicit expressions of the vector polynomials \eqref{expl_expr}. 

We start by defining two useful matrices and establishing their relations with the Pearson-type equation for the weight function \eqref{Pearson}. 

Let us define  $(n+1) \times (n+1)$  upper and lower triangular matrices,  that involve the 
coefficients of the weight function \eqref{wf},  

\begin{equation} \label{K1}
\begin{aligned}
K_{n,1} = &
\begin{pmatrix}
4 a_{4,0} & 0         & 2 a_{2,2} &         & \bigcirc \\
          & 4 a_{4,0} & 0         &  \ddots &          \\
          &           & \ddots    & \ddots  & 2 a_{2,2}   \\
          &           &           & \ddots & 0    \\
\bigcirc  &           &           &        & 4 a_{4,0}
\end{pmatrix}
\end{aligned}
\end{equation}
and
\begin{equation} \label{K2}
\begin{aligned}
K_{n,2} = 
\begin{pmatrix}
4 a_{0,4} &           &           &          & \bigcirc \\
   0      & 4 a_{0,4} &           &          &          \\
2 a_{2,2} &  0        & \ddots    &          &           \\
          & \ddots    & \ddots    & \ddots   &    \\
\bigcirc  &           & 2 a_{2,2} &   0   & 4 a_{4,0}
\end{pmatrix}.
\end{aligned}
\end{equation}
Then, one can easily see that the  matrices $K_{n,i}$ and $L_{n,i}$ defined in \eqref{L1L2}, for $i=1,2$, are related as
\begin{equation}\label{LLKKLL} 
\begin{aligned}
L_{n,1}\,L_{n+1,1} \, K_{n+2,1} = & \, 4 \, a_{4,0}\,L_{n,1}\,L_{n+1,1} + 2 \, a_{2,2}\,L_{n,2}\,L_{n+1,2}, \\[1ex]
L_{n,2}\,L_{n+1,2} \, K_{n+2,2} = & \, 4 \, a_{0,4}\,L_{n,2}\,L_{n+1,2} + 2 \, a_{2,2}\,L_{n,1}\,L_{n+1,1}.
\end{aligned}
\end{equation}

Using the relations \eqref{LLKKLL} and the Pearson matrix equation \eqref{Pearson}, we can prove the following result.

\begin{proposition}   \label{propo_psiXLKX}  
The following hold  
\begin{equation} \label{psiXLKX}  
\begin{aligned}
\psi_1(x,y)\,\mathbb{X}_{n-1} &= - L_{n-1,1} \,L_{n,1} \,L_{n+1,1}\, K_{n+2,1}\, \mathbb{X}_{n+2} -2a_{2,0}L_{n-1,1}\mathbb{X}_{n},
\\
\psi_2(x,y)\,\mathbb{X}_{n-1} &=  - L_{n-1,2} \,L_{n,2} \,L_{n+1,2}\, K_{n+2,2}\, \mathbb{X}_{n+2}  -2a_{0,2}L_{n-1,2}\mathbb{X}_{n}.
\end{aligned}
\end{equation}
\end{proposition}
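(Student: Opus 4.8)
The plan is to reduce both identities to a direct computation built only from the iterated action of multiplication by $x$ and $y$ on the canonical basis, together with the algebraic identities \eqref{LLKKLL}; the Pearson equation \eqref{Pearson} enters only through the explicit form of $\psi_1,\psi_2$. I would prove the first identity and then obtain the second by interchanging the roles of the two variables.

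First I would read off from \eqref{Pearson} that
$$
\psi_1(x,y)\,\mathbb{X}_{n-1} = -4a_{4,0}\,x^3\,\mathbb{X}_{n-1} - 2a_{2,2}\,x\,y^2\,\mathbb{X}_{n-1} - 2a_{2,0}\,x\,\mathbb{X}_{n-1},
$$
and evaluate each term by iterating \eqref{xXLX} and its $y$-analogue $y\,\mathbb{X}_n = L_{n,2}\,\mathbb{X}_{n+1}$. Since the $L_{k,i}$ are constant matrices while $x$ and $y$ are scalars, multiplication by $x$ or $y$ passes freely through them, giving $x\,\mathbb{X}_{n-1} = L_{n-1,1}\,\mathbb{X}_n$, then $x^3\,\mathbb{X}_{n-1} = L_{n-1,1}\,L_{n,1}\,L_{n+1,1}\,\mathbb{X}_{n+2}$, and, pulling $x\,\mathbb{X}_{n-1} = L_{n-1,1}\,\mathbb{X}_n$ out first and then using $y^2\,\mathbb{X}_n = L_{n,2}\,L_{n+1,2}\,\mathbb{X}_{n+2}$,
$$
x\,y^2\,\mathbb{X}_{n-1} = L_{n-1,1}\,L_{n,2}\,L_{n+1,2}\,\mathbb{X}_{n+2}.
$$
Substituting these back and factoring the common left factor $L_{n-1,1}$ out of the two degree-raising terms yields
$$
\psi_1(x,y)\,\mathbb{X}_{n-1} = -L_{n-1,1}\bigl[\,4a_{4,0}\,L_{n,1}\,L_{n+1,1} + 2a_{2,2}\,L_{n,2}\,L_{n+1,2}\,\bigr]\mathbb{X}_{n+2} - 2a_{2,0}\,L_{n-1,1}\,\mathbb{X}_n,
$$
and the bracket is exactly the right-hand side of the first relation in \eqref{LLKKLL}, hence equals $L_{n,1}\,L_{n+1,1}\,K_{n+2,1}$. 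This is the claimed formula for $\psi_1$; repeating the argument with the indices $1$ and $2$ exchanged, $a_{4,0}\to a_{0,4}$, $a_{2,0}\to a_{0,2}$, and invoking the second relation in \eqref{LLKKLL}, gives the formula for $\psi_2$.

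The argument is essentially bookkeeping, so I do not expect a real obstacle; the only points to watch are consistency in the order in which one raises the degrees (expanding $x\,y^2\,\mathbb{X}_{n-1}$ by raising the $x$-degree first versus the $y$-degree first both give valid expansions, so one should commit to a single bracketing), and checking that the index ranges are legitimate, the identities being meant for $n\geqslant 1$ with the size conventions of Section~\ref{sec_basic_tools} in force.
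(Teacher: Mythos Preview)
Your argument is correct and is precisely the computation the paper has in mind: it states only that the result follows from \eqref{LLKKLL} together with the Pearson equation \eqref{Pearson}, and your expansion of $\psi_i\,\mathbb{X}_{n-1}$ via iterated use of \eqref{xXLX} and its $y$-analogue, followed by an application of \eqref{LLKKLL} to collapse the bracket into $L_{n,1}L_{n+1,1}K_{n+2,1}$, is exactly that derivation made explicit.
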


\medskip

\subsection{Explicit expressions}

Next result brings explicit expressions for the matrix coefficients  $A_{n,i}$, $i=1,2$, of the three term relations \eqref{TTR-O}, and for the matrix coefficients $B_{n,i}$ and $C_{n,i}$, $i=1,2$, defined on the structure relations \eqref{rel_Est}, in terms of the matrices $G_n$, $L_{n,i}$, $N_{n,i}$,  and $K_{n,i}$, for $i=1,2$,  defined by \eqref{expl_expr}, \eqref{L1L2}, \eqref{N_n}, and \eqref{K1}--\eqref{K2}, respectively.

\begin{proposition} 
For the matrix coefficients $A_{n,i}$,  $B_{n,i}$ and  $C_{n,i}$, $i=1,2$, of the three term relations \eqref{TTR-O} and the structure relations \eqref{rel_Est}, respectively, the following properties hold
\begin{itemize}
\item[i)]
\begin{equation}\label{A_ni} 
A_{n,i} = G_n \, L_{n,i} \,  G_{n+1}^{-1},  \quad n \geqslant 0. 
\end{equation}

\item[ii)]
\begin{equation}\label{B_ni}
B_{n,i} = G_n\,L_{n-1,i}^T\, N_{n,i} \,G_{n-1}^{-1}, \quad n \geqslant 1.  
\end{equation}

\item[iii)] 
\begin{equation}\label{C_ni}
C_{n,i}^T = G_{n-3}\,L_{n-3,i}\,L_{n-2,i}\,L_{n-1,i}\,K_{n,i}\,G_n^{-1},  \quad n \geqslant 3,
\end{equation}
\end{itemize}
where the matrices $G_n$, $L_{n,i}$, $N_{n,i}$,  and $K_{n,i}$, for $i=1,2$,  are defined by \eqref{expl_expr}, \eqref{L1L2}, \eqref{N_n},   and  \eqref{K1}-\eqref{K2}, respectively.

Moreover, the right pseudo inverse matrix of $A_{n,i}$ is
\begin{equation}\label{A_inv}
A_{n,i}^{-1} = G_{n+1} \, L_{n,i}^T \,  G_{n}^{-1},  \quad i=1,2.
\end{equation}
\end{proposition}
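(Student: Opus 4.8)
The plan is to derive each identity directly from the explicit expansion \eqref{expl_expr} of the orthonormal polynomials in the canonical basis, transporting the canonical-basis relations \eqref{xXLX}, \eqref{partial_x}, and Proposition \ref{propo_psiXLKX} through the change of basis $\mathbb{P}_n = G_n\mathbb{X}_n + G_{n-2}^n\mathbb{X}_{n-2} + \cdots$. The key observation, which I would establish first, is that for the three term relations it suffices to match leading coefficient matrices: since the matrices $A_{n,i}$ in \eqref{TTR-O} are uniquely determined (they can be recovered as $A_{n,i} = (x_i\mathbb{P}_n, \mathbb{P}_{n+1}^T)$, where $x_1=x$, $x_2=y$), any matrices satisfying the recurrence must be the correct ones. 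So I would verify \eqref{A_ni} by computing the $\mathbb{X}_{n+1}$-component of both sides of $x\,\mathbb{P}_n = A_{n,1}\mathbb{P}_{n+1} + A_{n-1,1}^T\mathbb{P}_{n-1}$. On the left, the top-degree term is $x\,G_n\mathbb{X}_n = G_n\,L_{n,1}\,\mathbb{X}_{n+1}$ by \eqref{xXLX}; on the right, the only contribution of degree $n+1$ is $A_{n,1}\,G_{n+1}\mathbb{X}_{n+1}$. Equating gives $G_n L_{n,1} = A_{n,1}G_{n+1}$, hence \eqref{A_ni}; the case $i=2$ is identical using $y\,\mathbb{X}_n = L_{n,2}\mathbb{X}_{n+1}$.

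For part (ii), I would apply the same leading-term comparison to the structure relation $\partial_x\mathbb{P}_n = B_{n,1}\mathbb{P}_{n-1} + C_{n,1}\mathbb{P}_{n-3}$. Using \eqref{partial_x}, the degree-$(n-1)$ part of $\partial_x(G_n\mathbb{X}_n)$ is $G_n\,L_{n-1,1}^T\,N_{n,1}\,\mathbb{X}_{n-1}$, while the right-hand side contributes $B_{n,1}G_{n-1}\mathbb{X}_{n-1}$ at that degree; this yields \eqref{B_ni}. For part (iii) the relevant extraction is the \emph{lowest}-degree term. The coefficient $C_{n,1}$ multiplies $\mathbb{P}_{n-3}$, whose top-degree part is $G_{n-3}\mathbb{X}_{n-3}$, so I would instead work with the transposed/dual formulation: taking inner products, $C_{n,i} = (\partial_{x_i}\mathbb{P}_n, \mathbb{P}_{n-3}^T)$, and then integrating by parts using the Pearson equation to convert $\partial_{x_i}$ acting against $W$ into multiplication by $\psi_i$. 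Concretely, $C_{n,i}^T = (\mathbb{P}_{n-3}, (\partial_{x_i}\mathbb{P}_n)^T)$, and moving the derivative onto the weight produces a $-\psi_i\mathbb{P}_{n-3}$ term (the $\partial_{x_i}\mathbb{P}_{n-3}$ piece has degree $n-4 < n$ and is orthogonal to $\mathbb{P}_n$). Then Proposition \ref{propo_psiXLKX} expresses $\psi_i\mathbb{X}_{n-3}$ (the leading part of $\psi_i\mathbb{P}_{n-3}$, up to the factor $G_{n-3}$) in terms of $L$ and $K$ matrices applied to $\mathbb{X}_n$, and matching the $\mathbb{X}_n$-component against $G_n$ gives \eqref{C_ni}; the lower-order terms of $\psi_i\mathbb{P}_{n-3}$ have degree $< n$ and contribute nothing.

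Finally, for the pseudo-inverse formula \eqref{A_inv}, I would simply compute $A_{n,i}\,(G_{n+1}L_{n,i}^T G_n^{-1}) = G_n L_{n,i} G_{n+1}^{-1}\,G_{n+1}L_{n,i}^T G_n^{-1} = G_n\,(L_{n,i}L_{n,i}^T)\,G_n^{-1} = G_n I_{n+1} G_n^{-1} = I_{n+1}$, using $L_{n,i}L_{n,i}^T = I_{n+1}$ from the text, so this is a one-line check once \eqref{A_ni} is in hand. The main obstacle is part (iii): one must be careful that the degrees truly line up (that $\partial_{x_i}\mathbb{P}_n$ reaches down exactly to degree $n-3$ because $\psi_i$ has degree $3$, matching the claim $C_{1,i}=C_{2,i}=0$), and that the centrally symmetric structure \eqref{expl_expr} is respected throughout so that only the stated blocks appear; the integration-by-parts step and the bookkeeping of which canonical-basis component to extract require the most care, whereas (i), (ii), and the pseudo-inverse are essentially immediate leading-coefficient identifications.
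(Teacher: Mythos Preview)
Your proposal is correct and follows essentially the same route as the paper's proof: parts (i) and (ii) are obtained by substituting the explicit expansion \eqref{expl_expr} into \eqref{TTR-O} and \eqref{rel_Est} and equating leading coefficients, the pseudo-inverse is verified directly from $L_{n,i}L_{n,i}^T=I_{n+1}$, and for part (iii) the paper likewise computes $C_{n,i}=(\partial_{x_i}\mathbb{P}_n,\mathbb{P}_{n-3}^T)$, integrates by parts against $W$ to produce $-\psi_i$, discards the $\partial_{x_i}\mathbb{P}_{n-3}$ term by orthogonality, and then invokes Proposition~\ref{propo_psiXLKX} to read off the $\mathbb{X}_n$-coefficient.
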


\begin{proof}

\noindent
i) \ Substituting the explicit expression of $\mathbb{P}_n$ \eqref{expl_expr} on the three term relation \eqref{TTR-O} 
we have
\begin{align}  \label{eq_expl}
x \Big[ G_n \, \mathbb{X}_n + G_{n-2}^{n} \mathbb{X}_{n-2} + \cdots   \Big] = & \, A_{n,1} \Big[ G_{n+1} \mathbb{X}_{n+1} + G_{n-1}^{n+1} \mathbb{X}_{n-1} + \cdots   \Big] \\
&+  A_{n-1,1}^{T} \Big[ G_{n-1} \, \mathbb{X}_{n-1} + 
 G_{n-3}^{n-1} \, \mathbb{X}_{n-3} + \cdots \Big] \nonumber
\end{align}
and  analogue to the three term relation for the second variable. Using \eqref{xXLX}, and adjusting leading coefficients, we have
$$
G_n \, L_{n,i}  = A_{n,i}\,  G_{n+1}, \quad i=1,2,
$$
and \eqref{A_ni} holds.

The pseudo inverse for $A_{n,i}$ by the right side \eqref{A_inv} follows immediately.

\medskip

\noindent
ii) \ In the same way, substituting \eqref{expl_expr} on \eqref{rel_Est} for $i=1$, we get
\begin{align}
\partial_x \Big[ G_n \, \mathbb{X}_n + G_{n-2}^{n} \mathbb{X}_{n-2} + \cdots   \Big] \label{expl_par}
=& \, B_{n,1} \Big[ G_{n-1} \mathbb{X}_{n-1} + G_{n-3}^{n-1} \mathbb{X}_{n-3} + \cdots   \Big]  \\
& +  C_{n,1} \Big[ G_{n-3} \, \mathbb{X}_{n-3} + G_{n-5}^{n-3} \mathbb{X}_{n-5} + \cdots \Big]. \nonumber
\end{align}
Next, applying \eqref{partial_x}, and adjusting leading coefficients we obtain
\begin{equation*}
G_n\,L_{n-1,1}^T\, N_{n,1} = B_{n,1}\,G_{n-1}. 
\end{equation*}
Doing analogue for $i=2$ we get \eqref{B_ni}.

\medskip

\noindent
iii) \  Multiplying the structure relation \eqref{rel_Est} for $i=1$ by $\mathbb{P}_{n-3}^T$, and applying the inner product \eqref{ip}, we get
\begin{align*}
\langle \mathbf{u}, \partial_x [\mathbb{P}_{n}] \mathbb{P}_{n-3}^T \rangle = & \,  
B_{n,1} \langle \mathbf{u}, \mathbb{P}_{n-1}\, \mathbb{P}_{n-3}^T \rangle
+ C_{n,1} \langle \mathbf{u}, \mathbb{P}_{n-3}\, \mathbb{P}_{n-3}^T \rangle,
\end{align*}
that is,
$
C_{n,1} = \langle \mathbf{u}, \partial_x [\mathbb{P}_{n}] \mathbb{P}_{n-3}^T \rangle.
$
Then,
$$
C_{n,1}  = \langle \mathbf{u}, \partial_x [\mathbb{P}_{n} \mathbb{P}_{n-3}^T ] \rangle -
 \langle \mathbf{u},  \mathbb{P}_{n} \partial_x[ \mathbb{P}_{n-3}^T] \rangle
 =  \langle \mathbf{u}, \partial_x [\mathbb{P}_{n} \mathbb{P}_{n-3}^T ] \rangle,
$$
because the orthogonality. Integrating $C_{n,1} $ by parts on the variable $x$, taking into account the  behaviour of the weight function on $\mathbb{R}^2$, i.e., for $F(x,y) \in \Pi$, the value of $F(x,y)W(x,y)$ goes to zero when the variables $x$ and $y$ diverges positive or negatively,
and using the first Pearson equation for the weight function \eqref{Pearson}, we deduce
\begin{align*}
C_{n,1}  = & \, \iint_{-\infty}^{+\infty} \partial_x [ \mathbb{P}_n \, \mathbb{P}_{n-3}^T] \, W(x,y)\, dx\, dy  
\, = \, - \iint_{-\infty}^{+\infty} \mathbb{P}_n \,\mathbb{P}_{n-3}^T \, \partial_x \,  W(x,y)\, dx\, dy \\
= & \, - \iint_{-\infty}^{+\infty}  \mathbb{P}_n \,\mathbb{P}_{n-3}^T \,\psi_1(x,y)\,  W(x,y)\, dx\, dy. 
\end{align*}
Using the explicit expression \eqref{expl_expr} of $\mathbb{P}_{n-3}$ and relations \eqref{psiXLKX} of Proposition \ref{propo_psiXLKX}, we deduce that $\mathbb{P}_{n-3} \psi_1(x,y)$ is a $(n-2)\times 1$ vector polynomial of degree $n$. Hence,
$$
C_{n,1} = \iint_{-\infty}^{+\infty} \mathbb{P}_n \mathbb{P}_n^T \, W(x,y)\, dx\, dy \ (G_n^{-1})^{T}\, K_{n,1}^{T}\,L_{n-1,1}^T\,L_{n-2,1}^T\,L_{n-3,1}^{T} \,G_{n-3}^T,
$$
and \eqref{C_ni} holds for $i=1$. Similar calculation can be done for $i=2$.
\end{proof}

\medskip

Next result gives relations involving the matrix coefficients $A_{n,i}$,  $B_{n,i}$ $C_{n,i}$, for $i=1,2$, by themselves.

\begin{proposition}  
The   matrix coefficients $A_{n,i}$,  $B_{n,i}$ and  $C_{n,i}$, $i=1,2$, of the three term relations \eqref{TTR-O} and of the structure relations \eqref{rel_Est}, respectively, are related as follow
\begin{itemize}
\item[i)]
\begin{equation} \label{B_ni_A}
B_{n,i} = A_{n-1,i}^{-1}\, G_{n-1}\,N_{n,i} \,G_{n-1}^{-1}, \quad  n\geqslant1.  
\end{equation}

\item[ii)]
\begin{equation} \label{CnAn}
C_{n,i}^T = A_{n-3,i}\,A_{n-2,i}\,A_{n-1,i}\,G_n\,K_{n,i}\,G_n^{-1}, \quad n \geqslant 3.
\end{equation}

\item[iii)] 
\begin{equation} \label{PropCn}
C_{n,i} =  G_{n-2}^{n} G_{n-2}^{-1} B_{n-2,i} - B_{n,i} G_{n-3}^{n-1}G_{n-3}^{-1}, \quad n \geqslant 3,
\end{equation}
\end{itemize}
where the matrices $G^n_{n-k}$, $L_{n,i}$, $N_{n,i}$,  and $K_{n,i}$, for $i=1,2$,  are defined by \eqref{expl_expr}, \eqref{L1L2}, \eqref{N_n}, and  \eqref{K1}-\eqref{K2}, respectively.
 
Moreover, the following relations hold
\begin{equation} 
A_{n,i} G_{n-2k+1}^{n+1} + A_{n-1,i}^{T} G_{n-2k+1}^{n-2} = G_{n-2k}^{n} L_{n-2k,i} \label{GLA_i}
\end{equation}
and
\begin{equation} 
B_{n,i} G_{n-2k-1}^{n-1} + C_{n,i} G_{n-2k-1}^{n-3} = G_{n-2k}^{n} L_{n-2k-1,i}^T N_{n-2k,i}, \label{GLBC_i}
\end{equation}
for $\ k=0,1,\ldots, \lfloor n/2 \rfloor$. \\

\end{proposition}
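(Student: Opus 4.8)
The plan is to derive all of these identities from the explicit expressions \eqref{A_ni}, \eqref{B_ni}, \eqref{C_ni} and the right pseudoinverse \eqref{A_inv} obtained in the previous proposition, together with the centrally symmetric expansion \eqref{expl_expr}; no new analytic input is needed, since the Pearson equations and the integration by parts have already been used to produce those formulas. Parts i) and ii) are then immediate substitutions. For i), reading \eqref{A_inv} at level $n-1$ gives $A_{n-1,i}^{-1}=G_n\,L_{n-1,i}^{T}\,G_{n-1}^{-1}$; inserting this into the right-hand side of \eqref{B_ni_A}, the factor $G_{n-1}^{-1}G_{n-1}$ reduces to the identity and the remaining product $G_n\,L_{n-1,i}^{T}\,N_{n,i}\,G_{n-1}^{-1}$ is $B_{n,i}$ by \eqref{B_ni}. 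For ii), \eqref{A_ni} telescopes to $A_{n-3,i}\,A_{n-2,i}\,A_{n-1,i}=G_{n-3}\,L_{n-3,i}\,L_{n-2,i}\,L_{n-1,i}\,G_n^{-1}$, whence $A_{n-3,i}\,A_{n-2,i}\,A_{n-1,i}\,G_n\,K_{n,i}\,G_n^{-1}=G_{n-3}\,L_{n-3,i}\,L_{n-2,i}\,L_{n-1,i}\,K_{n,i}\,G_n^{-1}$, which equals $C_{n,i}^{T}$ by \eqref{C_ni}; this is \eqref{CnAn}.

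For the two families \eqref{GLA_i} and \eqref{GLBC_i}, the plan is to substitute the reduced expansion \eqref{expl_expr} of $\mathbb{P}_n$ and of its neighbours into the three term relations \eqref{TTR-O} and the structure relations \eqref{rel_Est}, to move the multiplication by $x$ or $y$ through the canonical basis using \eqref{xXLX} and its $y$-analogue $y\,\mathbb{X}_m=L_{m,2}\,\mathbb{X}_{m+1}$, to move the differentiation through using \eqref{partial_x} and its $y$-analogue, and then to equate the coefficient matrices of each canonical vector polynomial $\mathbb{X}_m$ on the two sides; this is legitimate because $\{\mathbb{X}_m\}_{m\geqslant0}$ is a basis. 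Matching the coefficient of $\mathbb{X}_{n-2k+1}$ in $x\,\mathbb{P}_n=A_{n,i}\mathbb{P}_{n+1}+A_{n-1,i}^{T}\mathbb{P}_{n-1}$ produces \eqref{GLA_i}, while matching the coefficient of $\mathbb{X}_{n-2k-1}$ in $\partial_x\mathbb{P}_n=B_{n,i}\mathbb{P}_{n-1}+C_{n,i}\mathbb{P}_{n-3}$ produces \eqref{GLBC_i}. The range $k=0,1,\dots,\lfloor n/2\rfloor$ is precisely the set of indices of nonzero blocks occurring in \eqref{expl_expr}, and the convention $G_m^n=\mathtt{0}$ for $m>n$ or $m<0$ disposes of the two extreme values of $k$; in particular the case $k=0$ reproduces \eqref{A_ni} and \eqref{B_ni}.

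Part iii) is then read off as the case $k=1$ of \eqref{GLBC_i}: since $G_{n-3}^{n-3}=G_{n-3}$, this instance reads $B_{n,i}\,G_{n-3}^{n-1}+C_{n,i}\,G_{n-3}=G_{n-2}^{n}\,L_{n-3,i}^{T}\,N_{n-2,i}$. Reading \eqref{B_ni} at level $n-2$ gives $L_{n-3,i}^{T}N_{n-2,i}=G_{n-2}^{-1}\,B_{n-2,i}\,G_{n-3}$; substituting this and cancelling the common right factor $G_{n-3}$ yields $C_{n,i}=G_{n-2}^{n}\,G_{n-2}^{-1}\,B_{n-2,i}-B_{n,i}\,G_{n-3}^{n-1}\,G_{n-3}^{-1}$, which is \eqref{PropCn}.

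There is no genuine obstacle in the argument; the care required is entirely in the bookkeeping. One must follow the degree shift of $\pm1$ that $x\cdot$ and $\partial_x$ cause relative to the expansion \eqref{expl_expr}, verify that every matrix product formed has compatible sizes, being consistent about which superscript each $G$-block inherits from the expansion of the polynomial it multiplies, and apply the vanishing convention for $G^n_m$ correctly at both endpoints of the range of $k$.
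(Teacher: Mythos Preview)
Your proof is correct and follows essentially the same route as the paper: parts i) and ii) by direct substitution of \eqref{A_inv} and \eqref{A_ni} into \eqref{B_ni} and \eqref{C_ni}, relations \eqref{GLA_i} and \eqref{GLBC_i} by matching coefficients of $\mathbb{X}_{n-2k+1}$ and $\mathbb{X}_{n-2k-1}$ in the expanded three term and structure relations, and part iii) as the case $k=1$ of \eqref{GLBC_i} combined with \eqref{B_ni} at level $n-2$. Your write-up is in fact slightly more detailed than the paper's in spelling out the telescoping in ii) and the cancellation in iii).
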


\begin{proof}
\eqref{B_ni_A} is deduced using the explicit expression of $A_{n-1,i}^{-1}$ in \eqref{B_ni}, and \eqref{CnAn} using the relation \eqref{A_ni}  in \eqref{C_ni}.

The expression \eqref{GLA_i} is deduced adjusting the coefficients of $\mathbb{X}_{n-1}, \mathbb{X}_{n-3}, \ldots$ in \eqref{eq_expl}, and \eqref{GLBC_i} is obtained in the same way in \eqref{expl_par}.

Finally, using $k=1$ in \eqref{GLBC_i}, we get
$$
C_{n,i} G_{n-3} = G_{n-2}^{n} L_{n-3,i}^T N_{n-2,i} -  B_{n,i} G_{n-3}^{n-1}.
$$
Since $B_{n-2,i} G_{n-3} = G_{n-2} L_{n-3,i}^T N_{n-2,i} $, we can write 
\begin{align*}
C_{n,i} G_{n-3} &= G_{n-2}^{n} G_{n-2}^{-1}  G_{n-2} L_{n-3,i}^T N_{n-2,i} -  B_{n,i} G_{n-3}^{n-1}\\
& =   G_{n-2}^{n} G_{n-2}^{-1} B_{n-2,i} G_{n-3} -  B_{n,i} G_{n-3}^{n-1}
\end{align*}
hence, we get  \eqref{PropCn}.
\end{proof}

We remark that, for the general case, equations \eqref{GLA_i} and \eqref{GLBC_i} can be found in \cite{MMPP18}.

\medskip

\subsection{Non-linear four term relations for the coefficients of the three term relations.} 

We now show non-linear four term relations for the matrix coefficients of the three term relations $A_{n,i}, i=1,2$. We must remark that the results given in this subsection hold for every centrally symmetric weight function, since structure relations have not been used.
 
\begin{proposition}  
The matrix coefficients of the three term relations for orthonormal polynomials, $A_{n,i}$,
 $n\geqslant 0$  and $i,j=1,2$, satisfy
\begin{equation} \label{FTR_An_1}
A_{n,i} A_{n,j}^T + A_{n-1,i}^T {A}_{n-1,j} = G_{n-2}^{n} G_{n-2}^{-1} A_{n-2,i} {A}_{n-1,j} - A_{n,i} {A}_{n+1,j} G_{n}^{n+2} G_{n}^{-1},
\end{equation}
where the matrices $G^n_{n-k}$  are defined in \eqref{expl_expr}.
\end{proposition}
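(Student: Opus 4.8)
The plan is to expand the degree-$(n+2)$ vector polynomial $x_i\,x_j\,\mathbb{P}_n$ in two different ways and to compare the coefficient of $\mathbb{X}_n$. Writing the three term relations \eqref{TTR-O} uniformly as $x_i\,\mathbb{P}_n = A_{n,i}\,\mathbb{P}_{n+1} + A_{n-1,i}^T\,\mathbb{P}_{n-1}$ with $x_1=x$ and $x_2=y$, I would first apply it to $\mathbb{P}_n$ in the variable $x_i$ and then apply it in the variable $x_j$ to the resulting polynomials $\mathbb{P}_{n+1}$ and $\mathbb{P}_{n-1}$, obtaining
\begin{equation*}
x_i\,x_j\,\mathbb{P}_n = A_{n,i}A_{n+1,j}\,\mathbb{P}_{n+2} + \big(A_{n,i}A_{n,j}^T + A_{n-1,i}^T A_{n-1,j}\big)\,\mathbb{P}_n + A_{n-1,i}^T A_{n-2,j}^T\,\mathbb{P}_{n-2}.
\end{equation*}
On the other hand, using the explicit expression \eqref{expl_expr} together with $x_i\,\mathbb{X}_m = L_{m,i}\,\mathbb{X}_{m+1}$ (see \eqref{xXLX}--\eqref{L1L2}) applied twice, the same polynomial equals $\sum_{k\geqslant0} G_{n-2k}^n\,L_{n-2k,i}\,L_{n-2k+1,j}\,\mathbb{X}_{n-2k+2}$.

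Next I would substitute the explicit expressions \eqref{expl_expr} of $\mathbb{P}_{n+2}$, $\mathbb{P}_n$ and $\mathbb{P}_{n-2}$ into the first identity and read off the coefficient of $\mathbb{X}_n$ on both sides. Since $\mathbb{P}_{n-2}$ contributes no $\mathbb{X}_n$ term, on the first side this coefficient is $A_{n,i}A_{n+1,j}\,G_n^{n+2} + \big(A_{n,i}A_{n,j}^T + A_{n-1,i}^T A_{n-1,j}\big)\,G_n$, while on the second side (the $k=1$ summand) it is $G_{n-2}^n\,L_{n-2,i}\,L_{n-1,j}$. Equating the two and solving for $A_{n,i}A_{n,j}^T + A_{n-1,i}^T A_{n-1,j}$, which is legitimate because $G_n$ is non-singular, yields
\begin{equation*}
A_{n,i}A_{n,j}^T + A_{n-1,i}^T A_{n-1,j} = G_{n-2}^n\,L_{n-2,i}\,L_{n-1,j}\,G_n^{-1} - A_{n,i}A_{n+1,j}\,G_n^{n+2}\,G_n^{-1}.
\end{equation*}

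Finally I would rewrite the first term on the right using the explicit form \eqref{A_ni}: since $A_{n-2,i}A_{n-1,j} = \big(G_{n-2}L_{n-2,i}G_{n-1}^{-1}\big)\big(G_{n-1}L_{n-1,j}G_n^{-1}\big) = G_{n-2}\,L_{n-2,i}\,L_{n-1,j}\,G_n^{-1}$, one gets $G_{n-2}^n\,L_{n-2,i}\,L_{n-1,j}\,G_n^{-1} = G_{n-2}^n\,G_{n-2}^{-1}\,A_{n-2,i}A_{n-1,j}$, and \eqref{FTR_An_1} follows. There is no serious obstacle here: the only points requiring care are organizing the double use of \eqref{TTR-O} so that the middle term appears precisely in the order $A_{n,i}A_{n,j}^T + A_{n-1,i}^T A_{n-1,j}$ (applying $x_j$ after $x_i$; the other order gives the commuted expression, equal by the standard commutativity relations for the $A_{n,i}$) and correctly identifying which summand of the canonical expansion produces $\mathbb{X}_n$. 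Note that orthogonality enters only through the three term relations, consistent with the stated fact that the identity holds for every centrally symmetric weight.
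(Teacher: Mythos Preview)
Your proof is correct and follows essentially the same route as the paper: expand $x_i x_j \mathbb{P}_n$ in two ways and match the coefficient of $\mathbb{X}_n$. The only cosmetic difference is that the paper first introduces intermediate coefficients $F^n_m$ via $x_i^2\mathbb{P}_n=\sum_m F^n_{m,i}\mathbb{P}_m$ and identifies $F^n_{n,i}$ with $A_{n,i}A_{n,i}^T+A_{n-1,i}^TA_{n-1,i}$ through the inner product $\langle \mathbf{u}, x_i^2\mathbb{P}_n\mathbb{P}_n^T\rangle$, whereas you obtain that middle coefficient directly by applying \eqref{TTR-O} twice; the subsequent coefficient comparison and the use of \eqref{A_ni} to rewrite $G_{n-2}^n L_{n-2,i}L_{n-1,j}G_n^{-1}$ are identical in both arguments.
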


\begin{proof}
First we compute  $\langle \mathbf{u}, x^2 \mathbb{P}_n \mathbb{P}_n^T \rangle$ using the three term relation \eqref{TTR-O} and the orthogonality. Hence,
\begin{align*}
\langle \mathbf{u}, x^2 \mathbb{P}_n \mathbb{P}_n^T \rangle &=
\langle \mathbf{u}, [A_{n,1}\,\mathbb{P}_{n+1}  + A^T_{n-1,1}\,\mathbb{P}_{n-1}] [\mathbb{P}_{n+1}^T\,A_{n,1}^T  + \mathbb{P}_{n-1}^T\,A_{n-1,1}] \rangle \\
&= A_{n,1}\, A_{n,1}^T  +  A^T_{n-1,1}\,A_{n-1,1}. 
\end{align*}

Since the entries of the sequence of vectors $\{\mathbb{P}_{n}\}_{n\geqslant 0}$ form a basis for the space $\Pi$, then $x^2 \mathbb{P}_n $ can be written as
\begin{align} \label{xxPF}
x^2 \mathbb{P}_n &= F_{n+2,1}^{n} \mathbb{P}_{n+2} + F_{n,1}^{n} \mathbb{P}_{n} +  F_{n-2,1}^{n} \mathbb{P}_{n-2} + \cdots ,
\end{align}
where $F_{j,1}^{n}$ are real matrices of order $(n+1) \times (j+1)$.
On the one hand, using \eqref{expl_expr}, we get
\begin{align}
x^2 \mathbb{P}_n = & \   
F_{n+2,1}^{n} [G_{n+2} \,\mathbb{X}_{n+2} + G_{n}^{n+2}\, \mathbb{X}_{n} + G_{n-2}^{n+2}\, \mathbb{X}_{n-2} + \cdots] \nonumber \\
&+ F_{n,1}^{n} [G_n \,\mathbb{X}_n + G_{n-2}^{n}\, \mathbb{X}_{n-2} + G_{n-4}^{n}\, \mathbb{X}_{n-4} + \cdots] \label{eq_equal_1}\\
&+ F_{n-2,1}^{n} [G_{n-2} \,\mathbb{X}_{n-2} + G_{n-4}^{n-2}\, \mathbb{X}_{n-4} + G_{n-6}^{n-2}\, \mathbb{X}_{n-6} + \cdots] + \cdots.
\nonumber
\end{align}
On the other hand,  we can write  $x^2 \mathbb{P}_n $ as
\begin{align}
x^2 \mathbb{P}_n & =  \,   
x^2 [G_n \,\mathbb{X}_n +G_{n-2}^{n}\, \mathbb{X}_{n-2} + G_{n-4}^{n}\, \mathbb{X}_{n-4} + \cdots] \nonumber   \\
& =  \, G_n L_{n,1} L_{n+1,1} \mathbb{X}_{n+2} + G_{n-2}^{n} L_{n-2,1} L_{n-1,1} \mathbb{X}_n + \cdots.\label{eq_equal_2}
\end{align}
Adjusting the coefficients of the terms of $\mathbb{X}_{n+2}$ and $\mathbb{X}_{n}$ on  \eqref{eq_equal_1} and  \eqref{eq_equal_2}, we get
\begin{align*}
F_{n+2,1}^{n}G_{n+2}  & = G_n L_{n,1} L_{n+1,1}, \\
F_{n+2,1}^{n}G_{n}^{n+2} +  F_{n,1}^{n} G_n  &  =  G_{n-2}^{n} L_{n-2,1} L_{n-1,1}.
\end{align*}
Therefore
$
F_{n+2,1}^{n} = G_n L_{n,1} L_{n+1,1} G_{n+2}^{-1} \, ,
$ 
and
\begin{align*}
F_{n,1}^{n} &=  G_{n-2}^{n} L_{n-2,1} L_{n-1,1} G_n^{-1} - G_n L_{n,1} L_{n+1,1} G_{n+2}^{-1}G_{n}^{n+2} G_n^{-1} \\
&= G_{n-2}^{n}G_{n-2}^{-1} G_{n-2} L_{n-2,1} L_{n-1,1} G_n^{-1} - G_n L_{n,1} L_{n+1,1} G_{n+2}^{-1}G_{n}^{n+2} G_n^{-1}.
\end{align*}
From \eqref{A_ni},  $G_{n-2} L_{n-2,1} L_{n-1,1} G_n^{-1} = A_{n-2,1}A_{n-1,1}$,  we obtain
\begin{align}\label{relF_n1A_n}
F_{n,1}^{n} &=  G_{n-2}^{n} G_{n-2}^{-1} A_{n-2,1}  {A}_{n-1,1} - A_{n,1} {A}_{n+1,1}  G_n^{n+2} G_n^{-1}.
\end{align}
Finally, since
$
\langle \mathbf{u}, x^2 \mathbb{P}_n \mathbb{P}_n^T \rangle =
F_{n,1}^{n} \langle \mathbf{u}, \mathbb{P}_n \mathbb{P}_n^T \rangle =
F_{n,1}^{n},
$
then, for $ n \geqslant 0$,
$$
A_{n,1} A_{n,1}^T + A_{n-1,1}^T {A}_{n-1,1} = G_{n-2}^{n} G_{n-2}^{-1} A_{n-2,1}  {A}_{n-1,1} - A_{n,1} {A}_{n+1,1} G_n^{n+2} G_n^{-1}.
$$ 

Similar reasoning using $y^2 \mathbb{P}_n $, $x\,y \mathbb{P}_n $ and $y\,x \mathbb{P}_n $ gives the results. 
\end{proof}

Let us consider the joint matrix $A_n$, given by \eqref{joint_An}, of order $2(n+1) \times (n+2)$,  and the joint matrix of order $(n+1) \times 2(n+2)$,  denoted by $\bar{A}_n$, and defined by
\begin{equation*}
\bar{A}_n = \left(\begin{array}{cc}
A_{n,1}, & 
A_{n,2} 
\end{array} \right).
\end{equation*}

Remember that the Kronecker product of  $A=[a_{ij}]$, matrix of order $m \times n$, and $B=[b_{ij}]$, matrix of order $p \times q$, denoted by $A \otimes B$, is defined as the following block matrix 
		$$ A \otimes B = \left(\begin{matrix}
			a_{11} B & \dots & a_{1n} B \\
			\vdots & \ddots & \vdots \\
			a_{m1} B & \dots & a_{mn} B
		\end{matrix}\right),$$
of order $mp \times nq$, see also \cite[p. 243]{HJ91}.

A direct use of the definition of Kronecker product  and equations \eqref{FTR_An_1} yields the following result.

\begin{corollary}
The sequences of the joint matrices  $A_n$ and $\bar{A}_n$ satisfy
$$
A_n A_n^T + \bar{A}_{n-1}^T \bar{A}_{n-1} = (I_{2} \otimes G_{n-2}^{n} G_{n-2}^{-1}) A_{n-2}  \bar{A}_{n-1} - A_n  \bar{A}_{n+1} 
(I_{2}  \otimes G_{n}^{n+2} G_{n}^{-1}).
$$
\end{corollary}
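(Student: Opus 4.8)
The plan is to assemble the four scalar-index identities \eqref{FTR_An_1} for the pairs $(i,j)\in\{1,2\}^2$ into a single block-matrix identity by recognizing each side of the Corollary as a $2\times 2$ block matrix whose $(i,j)$ block is exactly the corresponding instance of \eqref{FTR_An_1}. First I would expand the left-hand side: since $A_n = (A_{n,1};A_{n,2})^T$ is a column of two blocks and $\bar A_n = (A_{n,1},A_{n,2})$ is a row of two blocks, the product $A_n A_n^T$ is the $2\times 2$ block matrix with $(i,j)$ block $A_{n,i}A_{n,j}^T$, and $\bar A_{n-1}^T\bar A_{n-1}$ is the $2\times 2$ block matrix with $(i,j)$ block $A_{n-1,i}^T A_{n-1,j}$. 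So the left side has $(i,j)$ block equal to the left side of \eqref{FTR_An_1}.

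Next I would treat the right-hand side. The matrix $A_{n-2}\bar A_{n-1}$ is a product of a two-block column by a two-block row, so it is the $2\times 2$ block matrix with $(i,j)$ block $A_{n-2,i}A_{n-1,j}$; likewise $A_n\bar A_{n+1}$ has $(i,j)$ block $A_{n,i}A_{n+1,j}$. The role of the Kronecker factor $I_2\otimes M$, where $M=G_{n-2}^n G_{n-2}^{-1}$, is precisely that left-multiplication by $I_2\otimes M$ of a $2\times2$ block matrix multiplies every block on the left by $M$; similarly right-multiplication by $I_2\otimes (G_n^{n+2}G_n^{-1})$ multiplies every block on the right by $G_n^{n+2}G_n^{-1}$. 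Hence the $(i,j)$ block of the right-hand side of the Corollary is exactly $G_{n-2}^n G_{n-2}^{-1} A_{n-2,i}A_{n-1,j} - A_{n,i}A_{n+1,j}\,G_n^{n+2}G_n^{-1}$, the right side of \eqref{FTR_An_1}. Comparing blockwise and invoking \eqref{FTR_An_1} for all four choices of $(i,j)$ finishes the proof; the sizes match because each block is $(n+1)\times(n+2)$, so $A_nA_n^T$ and $\bar A_{n-1}^T\bar A_{n-1}$ are $2(n+1)\times2(n+1)$, consistent with $A_n$ being $2(n+1)\times(n+2)$ and $\bar A_{n-1}$ being $n\times 2(n+1)$ — wait, I should double check: $\bar A_{n-1}$ is $(n)\times 2(n+1)$, so $\bar A_{n-1}^T\bar A_{n-1}$ is $2(n+1)\times 2(n+1)$, matching.

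I do not anticipate a genuine obstacle here; the only thing requiring care is bookkeeping of the block structure and of which Kronecker factor acts on the left versus the right, together with checking that the index shifts ($n-2$, $n-1$, $n$, $n+1$) in the blocks of $A_{n-2}\bar A_{n-1}$ and $A_n\bar A_{n+1}$ line up with those in \eqref{FTR_An_1}. Concretely I would write:

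\begin{proof}
By the definitions of $A_n$ and $\bar A_n$, for any index $k$ the product $A_k\bar A_m$ is the $2\times 2$ block matrix whose $(i,j)$ block is $A_{k,i}A_{m,j}$, and $\bar A_m^T\bar A_m$ is the $2\times 2$ block matrix whose $(i,j)$ block is $A_{m,i}^T A_{m,j}$. In particular the $(i,j)$ block of $A_nA_n^T + \bar A_{n-1}^T\bar A_{n-1}$ is $A_{n,i}A_{n,j}^T + A_{n-1,i}^T A_{n-1,j}$. Moreover, left multiplication by $I_2\otimes M$ multiplies each block of a $2\times2$ block matrix on the left by $M$, and right multiplication by $I_2\otimes M'$ multiplies each block on the right by $M'$. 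Hence the $(i,j)$ block of
$$
(I_{2}\otimes G_{n-2}^{n}G_{n-2}^{-1})\,A_{n-2}\,\bar A_{n-1} - A_n\,\bar A_{n+1}\,(I_{2}\otimes G_{n}^{n+2}G_{n}^{-1})
$$
is $G_{n-2}^{n}G_{n-2}^{-1}A_{n-2,i}A_{n-1,j} - A_{n,i}A_{n+1,j}G_{n}^{n+2}G_{n}^{-1}$. Equating these block matrices entrywise is exactly the system of identities \eqref{FTR_An_1} for $i,j=1,2$, which holds for all $n\geqslant 0$. Therefore the stated identity holds.
\end{proof}
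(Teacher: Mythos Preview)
Your proposal is correct and follows exactly the approach indicated in the paper, which simply states that the result follows by a direct use of the definition of the Kronecker product together with equations \eqref{FTR_An_1}. The one slip is the passing remark that ``each block is $(n+1)\times(n+2)$'' --- in fact each block is $(n+1)\times(n+1)$ --- but this does not affect your formal proof, which is sound.
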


We observe that the matrices $F_{n,i}^{n+2}$, $F_{n,i}^{n}$ and $F_{n,i}^{n-2}$, for $i=1,2$  given in \eqref{xxPF} satisfy another interesting relation. 

\begin{corollary}
Let $F_{m,i}^{n}$ be the matrix coefficients defined in \eqref{xxPF}, for $n\geqslant2$, $i=1,2$ and $0\leqslant m\leqslant n+2$. Then
\begin{align*}
F_{n,i}^{n} =  G_{n-2}^{n} G_{n-2}^{-1} (F_{n-2,i}^{n})^T - F_{n+2,i}^n  G_n^{n+2} G_n^{-1}.
\end{align*}

\end{corollary}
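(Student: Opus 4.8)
The plan is to repackage the computation already carried out in the proof of relation \eqref{FTR_An_1}. Recall that there, expanding $x^2\,\mathbb{P}_n$ both in the orthonormal basis as in \eqref{xxPF} and in the canonical basis via \eqref{expl_expr}, the coefficient $F_{n,1}^{n}$ was shown to equal $G_{n-2}^{n} G_{n-2}^{-1} A_{n-2,1} A_{n-1,1} - A_{n,1} A_{n+1,1} G_n^{n+2} G_n^{-1}$, which is \eqref{relF_n1A_n}; along the way it was also found that $F_{n+2,1}^{n} = G_n L_{n,1} L_{n+1,1} G_{n+2}^{-1}$, and this equals $A_{n,1}A_{n+1,1}$ by inserting $G_{n+1}^{-1}G_{n+1}$ and using \eqref{A_ni} twice, exactly as was done in that proof for the product $A_{n-2,1}A_{n-1,1}$. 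The analogous statements for $i=2$ follow verbatim with $x^2$ replaced by $y^2$. So it is enough to rewrite the two products of $A$'s appearing in \eqref{relF_n1A_n} as the appropriate $F$'s.

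First I would record that $F_{n,i}^{n-2} = A_{n-2,i}\,A_{n-1,i}$ for $n\geqslant 2$ and $i=1,2$: applying \eqref{xxPF} with $n$ replaced by $n-2$ and iterating the three term relations \eqref{TTR-O} twice, the only contribution of top degree $n$ produced from $x^2\,\mathbb{P}_{n-2}$ (resp. $y^2\,\mathbb{P}_{n-2}$) is $A_{n-2,i}A_{n-1,i}\,\mathbb{P}_n$. Combined with $F_{n+2,i}^{n}=A_{n,i}A_{n+1,i}$, relation \eqref{relF_n1A_n} becomes $F_{n,i}^{n} = G_{n-2}^{n} G_{n-2}^{-1}\,F_{n,i}^{n-2} - F_{n+2,i}^{n}\,G_n^{n+2} G_n^{-1}$.

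It then remains only to identify $F_{n,i}^{n-2}$ with $(F_{n-2,i}^{n})^T$, and this comes from the self-adjointness of multiplication by $x^2$ (resp. $y^2$) with respect to \eqref{ip}. Multiplying \eqref{xxPF} on the right by $\mathbb{P}_{n-2}^T$, applying the functional $\mathbf{u}$ and using orthonormality gives $F_{n-2,i}^{n} = \langle \mathbf{u}, x^2\,\mathbb{P}_n\,\mathbb{P}_{n-2}^T\rangle$ for $i=1$ (and with $y^2$ for $i=2$), while likewise $F_{n,i}^{n-2} = \langle \mathbf{u}, x^2\,\mathbb{P}_{n-2}\,\mathbb{P}_n^T\rangle$. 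Since $x^2$ is a scalar factor, transposing the first expression yields $(F_{n-2,i}^{n})^T = F_{n,i}^{n-2}$, and substituting into the displayed relation gives the asserted identity. I expect no genuine obstacle here: every step is a rearrangement of identities already proven, and the only thing to watch is the bookkeeping of matrix sizes — $F_{n-2,i}^{n}$ is $(n+1)\times(n-1)$, so its transpose is $(n-1)\times(n+1)$, matching $F_{n,i}^{n-2}$ — but this is forced automatically by the dimensions in \eqref{relF_n1A_n}.
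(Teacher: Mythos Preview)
Your proposal is correct and follows essentially the same route as the paper: both arguments identify $F_{n+2,i}^{n}=A_{n,i}A_{n+1,i}$ and $F_{n,i}^{n-2}=A_{n-2,i}A_{n-1,i}$ (the paper via the full three term expansion of $x_i^2\,\mathbb{P}_n$, you via the top-degree term of $x_i^2\,\mathbb{P}_{n-2}$), substitute into \eqref{relF_n1A_n}, and then invoke $F_{n,i}^{n-2}=(F_{n-2,i}^{n})^T$. The only cosmetic difference is that you justify this last transpose identity via the self-adjointness of multiplication by $x_i^2$ under the inner product, whereas the paper reads it off directly from the explicit expression $F_{n-2,i}^{n}=A_{n-1,i}^T A_{n-2,i}^T$ obtained in the same expansion.
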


\begin{proof}
For simplicity here we denote the variable $x$ by $x_1$ and the variable $y$ by $x_2$, then using the three term relations  \eqref{TTR-O}, for $i=1,2$, 
$$
x_i^2\mathbb{P}_n  = A_{n,i}A_{n+1,i}\mathbb{P}_{n+2} + (A_{n,i}A_{n,i}^T + A_{n-1,i}^TA_{n-1,i})\mathbb{P}_n + A_{n-1,i}^TA_{n-2,i}^T \mathbb{P}_{n-2}.
$$
Comparing this expression with \eqref{xxPF}, we obtain
\begin{equation}\label{comparF_n}
\begin{aligned} 
	F_{n+2,i}^{n} & = A_{n,i}A_{n+1,i},  \\
	F_{n,i}^{n}   & = A_{n,i}A_{n,i}^T + A_{n-1,1}^TA_{n-1,i},  \\
	F_{n-2,i}^{n} & = A_{n-1,i}^TA_{n-2,i}^T. 
\end{aligned}
\end{equation}
Hence, from \eqref{relF_n1A_n},  and \eqref{comparF_n}, we have
\begin{align*}
F_{n,i}^{n} & =   G_{n-2}^{n} G_{n-2}^{-1} F_{n,i}^{n-2} - F_{n+2,i}^n  G_n^{n+2} G_n^{-1}, \quad n\geqslant 2.
\end{align*}
Observing  that $F_{n,i}^{n-2} = (F_{n-2,i}^{n})^T$, $n \geqslant 2$, we finally get the result.
\end{proof}

\medskip

\section{Matrix Painlev\'{e}-type difference equations}
\label{sec_Freud_type_}

In this section we obtain non-linear three term relations for the matrix coefficients, $A_{n,i}$, $i=1,2$, of the three term relations for orthonormal polynomials, \eqref{TTR-O},  extending the well known relation \eqref{dif_eq_1_var}, namely
$$
4 \,a_n^2\,(a_{n+1}^2 + a_n^2 + a_{n-1}^2) - 2 \,t\,a_n^2  = n+1,
$$
extensively studied (\cite{BR94}, \cite{Ma86}, \cite{Mag99}, \cite{Va08}, among others) to the bivariate case. We have to taking account the non-commutativity of the product of matrices. 

\medskip

We know that in bivariate case the matrix coefficients $A_{n,i}$, for $i=1,2$, of the three term relations \eqref{TTR-O}, of order  $(n+1)\times(n+2)$, take the place of the coefficients $a_n$ of the univariate case. We can now prove the following result.

\begin{theorem}[Matrix Painlev\'{e}-type difference equations] \label{Theo_TTR_A}
For $n\geqslant 0$, the following relations, for the matrix coefficients $A_{n,i}$, $i=1,2$, 
of the three term relations \eqref{TTR-O},
hold
\begin{align*}
4\,a_{4,0}\,A_{n,1}&\left[(A_{n+1,1} A_{n+1,1}^T) A^T_{n,1} +  A_{n,1}^T(A_{n,1}A_{n,1}^T + A_{n-1,1}^TA_{n-1,1})\right]\\
& + 2\,a_{2,2}\, A_{n,1} \,\left[(A_{n+1,2} A_{n+1,1}^T) A^T_{n,2} 
+ A_{n,2}^T (A_{n,1}A_{n,2}^T +  A_{n-1,1}^TA_{n-1,2})\right]\\
& + 2\,a_{2,0}\, A_{n,1}\,A^T_{n,1} = G_nN_{n+1,1}G_{n}^{-1}
\end{align*}
and
\begin{align*}
4\,a_{0,4}\,A_{n,2}&\left[(A_{n+1,2} A_{n+1,2}^T) A^T_{n,2} +  A_{n,2}^T(A_{n,2}A_{n,2}^T + A_{n-1,2}^TA_{n-1,2})\right]\\
& + 2\,a_{2,2}\, A_{n,2} \,\left[(A_{n+1,1} A_{n+1,2}^T) A^T_{n,1} 
+ A_{n,1}^T (A_{n,2}A_{n,1}^T +  A_{n-1,2}^TA_{n-1,1})\right]\\
& + 2\,a_{0,2}\, A_{n,2}\,A^T_{n,2} = G_nN_{n+1,2}G_{n}^{-1},
\end{align*}
where  $a_{4,0},a_{2,2},a_{0,4},a_{2,0},a_{0,2}$ are  the coefficients of the bivariate Freud weight function \eqref{wf}-\eqref{q(x,y)}.
\end{theorem}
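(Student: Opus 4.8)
The plan is to mimic the univariate derivation of \eqref{dif_eq_1_var}: there, one computes $\langle \mathbf{u}, \partial_x[x\,p_n]\,p_n\rangle$ in two ways, using the Pearson equation (integration by parts) on one side and the three term recurrence plus the structure relation on the other. Here I would instead start from the structure relation \eqref{rel_Est} for $\partial_x\mathbb{P}_n$, but it is cleaner to use the explicit expression \eqref{B_ni} for $B_{n,i}$, namely $B_{n,i}=G_n\,L_{n-1,i}^T N_{n,i}\,G_{n-1}^{-1}$, together with $A_{n,i}=G_n L_{n,i} G_{n+1}^{-1}$ from \eqref{A_ni}. The quantity $G_n N_{n+1,1} G_n^{-1}$ appearing on the right-hand side suggests I should look at $B_{n+1,1}$ conjugated appropriately, or equivalently compute $\langle \mathbf{u}, \psi_1(x,y)\,x\,\mathbb{P}_n\,\mathbb{P}_n^T\rangle$, since $\psi_1$ carries exactly the coefficients $4a_{4,0},2a_{2,2},2a_{2,0}$ that decorate the left-hand side.

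Concretely, I would compute $I_n := \langle \mathbf{u}, \partial_x[x\,\mathbb{P}_n]\,\mathbb{P}_n^T\rangle$ two ways. On the one hand, $\partial_x[x\,\mathbb{P}_n] = \mathbb{P}_n + x\,\partial_x\mathbb{P}_n$, and using orthonormality together with the three term relation \eqref{TTR-O} (to expand the $x$) and the structure relation \eqref{rel_Est} (to expand $\partial_x\mathbb{P}_n$), $I_n$ collapses to a finite sum of products of $A_{n,i}$, $B_{n,i}$; the diagonal block yields $I_{n+1}$ (from the $\mathbb{P}_n$ term) plus the $A$--$B$ contractions. On the other hand, integrate by parts: $I_n = -\langle \mathbf{u}, x\,\mathbb{P}_n\,\mathbb{P}_n^T\,\partial_x W\rangle = -\langle \mathbf{u}, x\,\mathbb{P}_n\,\mathbb{P}_n^T\,\psi_1\rangle$, and now substitute $\psi_1(x,y) = -(4a_{4,0}x^3 + 2a_{2,2}xy^2 + 2a_{2,0}x)$. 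Each monomial times $x\,\mathbb{P}_n$ (so $x^4\mathbb{P}_n$, $x^2y^2\mathbb{P}_n$, $x^2\mathbb{P}_n$) is reduced by iterating \eqref{TTR-O}, and after projecting onto $\mathbb{P}_n$ via orthonormality only the $A_{n\pm1,i}$, $A_{n,i}$ combinations displayed in the theorem survive. Matching the two expressions for $I_n$ gives the first identity; the $B$-side must be rewritten using \eqref{B_ni} and $A_{n,i}$, which is where $G_n N_{n+1,1} G_n^{-1}$ emerges (note $L_{n,1}L_{n,1}^T = I_{n+1}$ kills the $L$'s). The second identity is the same computation with $\partial_y$, $\psi_2$, and the roles of the two variables swapped.

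The bookkeeping obstacle — and the main point where care is needed — is the non-commutativity: in the univariate case $x^4 p_n$ expands symmetrically and the three $a^2$ terms combine as a scalar, whereas here the order of the matrix products is forced by which recurrence step is applied first, so the bracketed expressions such as $(A_{n+1,1}A_{n+1,1}^T)A_{n,1}^T + A_{n,1}^T(A_{n,1}A_{n,1}^T + A_{n-1,1}^TA_{n-1,1})$ are exactly $A_{n,1}^T F_{n,1}^n$ type contractions that I would identify using \eqref{comparF_n} and the relation \eqref{relF_n1A_n} already established. Organizing the reduction of $x^4\mathbb{P}_n$ and $x^2y^2\mathbb{P}_n$ through the iterated three term relations, and correctly pairing each surviving block with the right side of $\psi_1$, is the bulk of the work; once the $x^2$-by-$\mathbb{P}_n$ expansion \eqref{xxPF} is reused (so that $x^4\mathbb{P}_n = x^2(x^2\mathbb{P}_n)$ and the mixed term $x^2y^2\mathbb{P}_n = x^2(y^2\mathbb{P}_n)$ are handled by one more application of \eqref{TTR-O}), everything is a routine, if lengthy, contraction of the $A$-matrices, and the identification of the right-hand side with $G_n N_{n+1,i} G_n^{-1}$ follows from \eqref{B_ni}, \eqref{A_ni}, and $L_{n,i}L_{n,i}^T = I_{n+1}$.
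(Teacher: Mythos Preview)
Your plan has a structural gap. The quantity you choose, $I_n=\langle \mathbf u,\partial_x[x\,\mathbb P_n]\,\mathbb P_n^T\rangle$, after integration by parts produces the pairing $-\langle \mathbf u, x\psi_1\,\mathbb P_n\,\mathbb P_n^T\rangle$ (plus a term $-\langle \mathbf u, x\mathbb P_n\,\partial_x[\mathbb P_n]^T\rangle=-A_{n-1,1}^T B_{n,1}^T$ that you dropped). But every moment of the form $\langle \mathbf u, x^{2k}y^{2l}\,\mathbb P_n\mathbb P_n^T\rangle$ is a \emph{symmetric} $(n{+}1)\times(n{+}1)$ matrix, so the identity you obtain is symmetric. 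The theorem is not: its right-hand side $G_nN_{n+1,1}G_n^{-1}$ is symmetric only when $G_n^TG_n$ commutes with the diagonal $N_{n+1,1}$, and its left-hand side contains, e.g., $A_{n,1}A_{n,1}^T A_{n-1,1}^T A_{n-1,1}$, whose transpose is $A_{n-1,1}^T A_{n-1,1}A_{n,1}A_{n,1}^T$. Concretely, expanding $\langle \mathbf u, x^4\mathbb P_n\mathbb P_n^T\rangle=\langle \mathbf u,(x^2\mathbb P_n)(x^2\mathbb P_n)^T\rangle$ via \eqref{comparF_n} yields the five terms $A_{n,1}A_{n+1,1}A_{n+1,1}^TA_{n,1}^T+(A_{n,1}A_{n,1}^T+A_{n-1,1}^TA_{n-1,1})^2+A_{n-1,1}^TA_{n-2,1}^TA_{n-2,1}A_{n-1,1}$, not the three in the theorem's bracket; the extra pieces are exactly the symmetrizing partner coming from the $A_{n-1,1}^T\mathbb P_{n-1}$ half of $x\mathbb P_n$. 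What your computation actually delivers is (a version of) the theorem at level $n$ \emph{plus} the transpose of the theorem at level $n-1$, not the theorem itself.

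The fix---and the paper's route---is to break the symmetry at the outset by pairing $\mathbb P_{n+1}$ with $\mathbb P_n$ instead of $x\mathbb P_n$ with $\mathbb P_n$: compute $\langle \partial_x\mathbf u,\mathbb P_{n+1}\mathbb P_n^T\rangle=\langle \psi_1\mathbf u,\mathbb P_{n+1}\mathbb P_n^T\rangle$. The left side is $-B_{n+1,1}$ by the structure relation (the cross term $\langle \mathbf u,\mathbb P_{n+1}\partial_x[\mathbb P_n]^T\rangle$ now vanishes by orthogonality, which is precisely why this pairing is the right one). On the right, expand $x^3\mathbb P_{n+1}$, $xy^2\mathbb P_{n+1}$, $x\mathbb P_{n+1}$ via iterated \eqref{TTR-O} and project onto $\mathbb P_n$: this gives exactly the cubic $A$-combinations in the statement. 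Finally left-multiply by $A_{n,1}$ and use \eqref{B_ni_A}, i.e.\ $A_{n,1}B_{n+1,1}=A_{n,1}A_{n,1}^{-1}G_nN_{n+1,1}G_n^{-1}=G_nN_{n+1,1}G_n^{-1}$ (here is where $L_{n,1}L_{n,1}^T=I_{n+1}$ is used). Your instincts about which ingredients matter (\eqref{B_ni}, \eqref{A_ni}, iterated three term relations) are all correct; only the choice of test pairing needs to change.
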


\begin{proof}
By using \eqref{Pearson}, we know that  
$$
\langle \partial_x\mathbf{u}, \mathbb{P}_{n+1}\mathbb{P}_n^T\rangle = \langle \psi_1\mathbf{u}, \mathbb{P}_{n+1}\mathbb{P}_n^T\rangle.
$$
The left-hand term is given by
\begin{align*}
\langle \partial_x\mathbf{u}, \mathbb{P}_{n+1}\mathbb{P}_n^t\rangle
=& - \langle \mathbf{u}, \partial_x[\mathbb{P}_{n+1}\mathbb{P}_n^T]\rangle 
= - \langle \mathbf{u}, \partial_x[\mathbb{P}_{n+1}]\mathbb{P}_n^T\rangle - \langle \mathbf{u}, \mathbb{P}_{n+1}\partial_x[\mathbb{P}_n^T]
\rangle \\
=& - \langle \mathbf{u}, \partial_x[\mathbb{P}_{n+1}]\mathbb{P}_n^T\rangle = - B_{n+1,1},
\end{align*}
by using the structure relation \eqref{rel_Est}.

To compute the right-hand term, we apply successively the three term relations. Observe that
\begin{align*}
x^2\mathbb{P}_{n+1} =& A_{n+1,1}A_{n+2,1}\mathbb{P}_{n+3} + [A_{n+1,1}  A^T_{n+1,1} + A^T_{n,1}A_{n,1}]\mathbb{P}_{n+1}      
            +  A^T_{n,1}A^T_{n-1,1}\mathbb{P}_{n-1},\\
x^3\mathbb{P}_{n+1} =& A_{n+1,1}A_{n+2,1}A_{n+3,1}\mathbb{P}_{n+4} \\
           & + [A_{n+1,1} A_{n+2,1} A^T_{n+2,1} + A_{n+1,1}A^T_{n+1,1}A_{n+1,1} + A_{n,1}^T A_{n,1} A_{n+1,1}]\mathbb{P}_{n+2}     \\
           & + [A_{n+1,1} A_{n+1,1}^T A^T_{n,1} + A_{n,1}^TA_{n,1}A_{n,1}^T +  A_{n,1}^TA_{n-1,1}^TA_{n-1,1}] \mathbb{P}_n\\
           & +  A^T_{n,1}A^T_{n-1,1}A_{n-2,1}^T\mathbb{P}_{n-2},  
                \end{align*}

\begin{enumerate}[(i)]

\item Using $x\mathbb{P}_{n+1} = A_{n+1,1}\mathbb{P}_{n+2} + A^T_{n,1}\mathbb{P}_{n}$, we have
$$
\langle \mathbf{u}, x\mathbb{P}_{n+1}\mathbb{P}_n^T\rangle 
=  \langle \mathbf{u}, [A_{n+1,1}\mathbb{P}_{n+2} + A^T_{n,1}\mathbb{P}_{n}] \mathbb{P}_n^T\rangle =  A^T_{n,1}.
$$

\item Moreover,
$$
\langle \mathbf{u}, x^3\mathbb{P}_{n+1}\mathbb{P}_n^T\rangle = A_{n+1,1} A_{n+1,1}^T A^T_{n,1} + A_{n,1}^TA_{n,1}A_{n,1}^T +  A_{n,1}^TA_{n-1,1}^TA_{n-1,1}.
$$

\item Analogously, using $xy^2 = yxy$,
$$
\langle \mathbf{u}, xy^2\mathbb{P}_{n+1}\mathbb{P}_n^T\rangle = A_{n+1,2} A_{n+1,1}^T A^T_{n,2} 
+ A_{n,2}^TA_{n,1}A_{n,2}^T +  A_{n,2}^TA_{n-1,1}^TA_{n-1,2}.
$$
\end{enumerate}
Observe that
\begin{align*}
\langle \psi_1\mathbf{u}, \mathbb{P}_{n+1}&\mathbb{P}_n^T\rangle = \langle \mathbf{u}, \psi_1\mathbb{P}_{n+1}\mathbb{P}_n^T\rangle \\
=& -4a_{4,0}\langle \mathbf{u}, x^3\mathbb{P}_{n+1}\mathbb{P}_n^T\rangle - 2a_{2,2} \langle \mathbf{u}, xy^2\mathbb{P}_{n+1}\mathbb{P}_n^T\rangle - 2a_{2,0} \langle \mathbf{u}, x\mathbb{P}_{n+1}\mathbb{P}_n^T\rangle\\
=& -4a_{4,0}[A_{n+1,1} A_{n+1,1}^T A^T_{n,1} + A_{n,1}^TA_{n,1}A_{n,1}^T +  A_{n,1}^TA_{n-1,1}^TA_{n-1,1}]\\
& - 2a_{2,2} [A_{n+1,2} A_{n+1,1}^T A^T_{n,2} 
+ A_{n,2}^TA_{n,1}A_{n,2}^T +  A_{n,2}^TA_{n-1,1}^TA_{n-1,2}]\\
& - 2a_{2,0}A^T_{n,1}.
\end{align*}
Therefore, 
\begin{align*}
4 a_{4,0}&\left[(A_{n+1,1} A_{n+1,1}^T) A^T_{n,1} +  A_{n,1}^T(A_{n,1}A_{n,1}^T + A_{n-1,1}^TA_{n-1,1})\right]\\
& + 2 a_{2,2} \left[(A_{n+1,2} A_{n+1,1}^T) A^T_{n,2} 
+ A_{n,2}^T (A_{n,1}A_{n,2}^T +  A_{n-1,1}^TA_{n-1,2})\right]\\
& + 2 a_{2,0}A^T_{n,1} = B_{n+1,1}.
\end{align*}
Since $B_{n+1,1} = A^{-1}_{n,1}G_nN_{n+1}G_{n}^{-1}$, we multiply all the equation by $A_{n,i}$ by the left-hand side, and the result follows for $i=1$. Analogous calculation can be done for $i=2$.
\end{proof}

For $a_{4,0}=a_{0,4}=1$, $a_{2,2}=0$, and $a_{2,0}=a_{0,2}= -t$, expressions in Theorem \ref{Theo_TTR_A} read as
\begin{align*}
4\,A_{n,i}\left[(A_{n+1,i} A_{n+1,i}^T) A^T_{n,i} +\right. & \left.A_{n,i}^T(A_{n,i}A_{n,i}^T + A_{n-1,i}^TA_{n-1,i})\right]
 - 2\,t\,A_{n,i}\,A^T_{n,i} \\
&= G_nN_{n+1,i}G_{n}^{-1},
\end{align*}
for $i=1,2$. We can say that above expressions extend the well known Freud equation \eqref{dif_eq_1_var} for the univariate case, since here the matrix coefficients $A_{n,i}, i=1,2,$ take the same roles as the coefficients $a_n$, obey the same product and difference relations, and matrices $G_nN_{n+1,i}G_{n}^{-1}$ extend the independent term $n+1$.

In the univariate case, equation \eqref{dif_eq_1_var} is a non-linear recurrence that could determine, if no zeros occur, the
consecutive recursion coefficients. However, in the bivariate case, matrix Painlev\'{e}-type difference equations are not recurrence relations for the matrix coefficients $A_{n,i}$. The matrices $A_{n,i}$ are full rank matrices invertible only by the right hand side, and this fact prevent to use the relation as a recurrence relation to compute $A_{n+1,i}$. This fact is the same as happens with the three term relations \eqref{TTR-O}, they are not recurrence relations (\cite[p.~73]{DX14}). 

Even though the dimension of the matrix coefficients  $A_{n,i}$ grows linearly with respect to the index $n$, the matrix representation of the orthogonal polynomials yields interesting matrix difference equations and in the same formal model as the discrete Painlev\'{e} equation dPI. The use of the vector-matrix representation has allowed us to construct an extension of equation \eqref{dif_eq_1_var} that reads in a similar way. Theorem \ref{Theo_TTR_A} could be proved without matrix formulation as in \cite{Sue99}, but the expressions would have read in a very cumbersome way.

\medskip

\section{2D Langmuir lattices}
\label{sec_Langmuir_lattice}

The aim of this section is to deduce formal 2D Langmuir lattices associated with a Freud weight function in two variables. As in the previous sections, our results involve matrices of increasing size and can be read as extensions of the univariate Langmuir lattices.

We assume that the coefficients of the polynomial $q(x,y)$ in \eqref{q(x,y)} satisfies  $a_{2,0} = a_{0,2}=-t$, with $t \in \mathbb{R}$, then the weight function is given by
$$
W_t(x,y) = e^{-(a_{4,0}x^4 + a_{2,2}x^2y^2 + a_{0,4}y^4) + t (x^2 + y^2)}, \quad  (x,y) \in \mathbb{R}^2.
$$
We consider the inner product
\begin{equation}\label{ip_t}
( f, g)_t := \langle \mathbf{u}_t, f\,g\rangle = \iint_{-\infty}^{+\infty} f(x,y)\,g(x,y)\,W_t(x,y)\, dx\,dy
\end{equation}
that depends on a time parameter $t$. As usual, we denote the derivative of $f(t)$ with respect to $t$ by  $\dot{f} =\dfrac{d}{dt}f(t)$.

As the univariate case, to deduce Langmuir lattices we will need a bivariate monic polynomial system 
$\{\mathbb{Q}_n(x,y,t)\}_{n \geqslant 0}\equiv \{\mathbb{Q}_n(t)\}_{n \geqslant 0}$ orthogonal with respect to the inner product \eqref{ip_t} and depending on $t$. Here $\mathbb{Q}_n(t)$ is a vector of monic polynomials on the variables $(x,y)$ such that its coefficients depend on the parameter $t$.  For $n\geqslant0$, we say that $\mathbb{Q}_n(t)$ is monic if the matrix $G_n(t)$ in its explicit expression \eqref{expl_expr} is the identity matrix $I_{n+1}$. In this case,
\begin{align*}
&  (\mathbb{Q}_n(t), \mathbb{Q}_n(t)^T) = \langle \mathbf{u},\mathbb{Q}_n(t)\,\mathbb{Q}_n(t)^T  \rangle = H_{n}(t), \\
&  (\mathbb{Q}_n(t), \mathbb{Q}_m(t)^T) = \langle \mathbf{u},\mathbb{Q}_n(t)\,\mathbb{Q}_m(t)^T  \rangle = \mathtt{0}, 
\end{align*}
where $H_n = H_{n}(t)$ is a $(n + 1)$ symmetric and positive definite matrix depending on $t$ and again $\mathtt{0}$ is the zero matrix of adequate size. 

The coefficients of the three term relations for $\{\mathbb{Q}_n(t) \}_{n \geqslant 0}$ also depends on $t$. Since the inner product \eqref{ip_t} is centrally symmetric, the three term relations take the form  
\begin{equation} \label{TTRmonict}
\begin{aligned}
 x \, \mathbb{Q}_n(t) = L_{n,1} \mathbb{Q}_{n+1}(t) + E_{n,1}(t) \mathbb{Q}_{n-1}(t),  \\
 y \, \mathbb{Q}_n(t) = L_{n,2} \mathbb{Q}_{n+1}(t) + E_{n,2}(t) \mathbb{Q}_{n-1}(t), 
\end{aligned}
\end{equation}
for $ n\geqslant 0$, where $\mathbb{Q}_{-1}(t)=0$, $\mathbb{Q}_{0}(t)=1$, and for $i=1,2$, the matrices $L_{n,i}$ were defined in \eqref{L1L2} and $E_{n,i}(t)$ are matrices of order $(n+1) \times n$, (see \cite[p. 70]{DX14}).
The matrices  $E_{n,i}(t)$ also satisfy
\begin{equation} \label{EHHE}
 E_{n,i}(t) H_{n-1}(t) = H_n(t) L_{n-1,i}^{T}, \quad i=1,2. 
\end{equation} 

\medskip

Next, we find the following relation between $\dot{H}_n(t)$ and $H_n(t)$. 

\begin{lemma} \label{dHVH}
For $n \geqslant 0$,  
$$
\dot{H}_n(t) = V_{n+1}(t) H_{n}(t),
$$
where 
\begin{equation}\label{V_n}
V_{n+1}(t) = L_{n,1} E_{n+1,1}(t) + L_{n,2} E_{n+1,2}(t) + E_{n,1}(t) L_{n-1,1}  + E_{n,2}(t) L_{n-1,2}.
\end{equation}
\end{lemma}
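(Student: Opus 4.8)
The plan is to differentiate the Gram matrix $H_n(t) = \langle \mathbf{u}_t, \mathbb{Q}_n(t)\,\mathbb{Q}_n(t)^T\rangle$ with respect to $t$ directly under the integral sign, and then reduce the result using orthogonality together with the three term relations \eqref{TTRmonict}. Writing $H_n(t) = \iint_{-\infty}^{+\infty}\mathbb{Q}_n(t)\,\mathbb{Q}_n(t)^T\,W_t(x,y)\,dx\,dy$ and observing that $\partial_t W_t(x,y) = (x^2+y^2)\,W_t(x,y)$, the product rule gives
\begin{align*}
\dot H_n(t) &= \iint \dot{\mathbb{Q}}_n\,\mathbb{Q}_n^T\,W_t\,dx\,dy + \iint \mathbb{Q}_n\,\dot{\mathbb{Q}}_n^T\,W_t\,dx\,dy + \langle \mathbf{u}_t,(x^2+y^2)\,\mathbb{Q}_n\,\mathbb{Q}_n^T\rangle .
\end{align*}
Since $\mathbb{Q}_n(t)$ is monic, its leading coefficient matrix is the constant $I_{n+1}$, so by the explicit expression \eqref{expl_expr} together with central symmetry each entry of $\dot{\mathbb{Q}}_n$ is a polynomial of total degree at most $n-2$. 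The orthogonality of $\mathbb{Q}_n$ to $\Pi_{n-1}$ then forces the first two integrals to vanish, leaving $\dot H_n(t) = \langle \mathbf{u}_t,(x^2+y^2)\,\mathbb{Q}_n\,\mathbb{Q}_n^T\rangle$.

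The second step is to expand $(x^2+y^2)\,\mathbb{Q}_n$ in the orthogonal basis by iterating \eqref{TTRmonict} twice. Applying the relations successively yields
\begin{align*}
x^2\,\mathbb{Q}_n &= L_{n,1}L_{n+1,1}\,\mathbb{Q}_{n+2} + \bigl(L_{n,1}E_{n+1,1} + E_{n,1}L_{n-1,1}\bigr)\,\mathbb{Q}_n + E_{n,1}E_{n-1,1}\,\mathbb{Q}_{n-2},
\end{align*}
and the analogous identity for $y^2\,\mathbb{Q}_n$ obtained by replacing the second subscript $1$ by $2$. Adding the two, the coefficient of $\mathbb{Q}_n$ is exactly $V_{n+1}(t)$ as defined in \eqref{V_n}. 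Pairing against $\mathbb{Q}_n^T$ and using $\langle \mathbf{u}_t, \mathbb{Q}_{n+2}\,\mathbb{Q}_n^T\rangle = \langle \mathbf{u}_t, \mathbb{Q}_{n-2}\,\mathbb{Q}_n^T\rangle = \mathtt{0}$ and $\langle \mathbf{u}_t, \mathbb{Q}_n\,\mathbb{Q}_n^T\rangle = H_n(t)$ then gives $\dot H_n(t) = V_{n+1}(t)\,H_n(t)$, which is the claim.

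The computation is essentially routine; the one point deserving a little care is the vanishing of the two integrals containing $\dot{\mathbb{Q}}_n$, which rests precisely on the degree-and-parity bookkeeping in \eqref{expl_expr} in the monic (hence $t$-independent leading coefficient) case, guaranteeing $\dot{\mathbb{Q}}_n\in\Pi_{n-2}$ so that orthogonality applies entrywise. For the edge case $n=0$ one interprets $E_{0,i}$ and the $\mathbb{Q}_{n-1},\mathbb{Q}_{n-2}$ terms as absent, so that $\dot{\mathbb{Q}}_0=0$ and the identity reduces to $\dot H_0(t) = V_1(t)\,H_0(t)$ with $V_1(t) = L_{0,1}E_{1,1}(t) + L_{0,2}E_{1,2}(t)$.
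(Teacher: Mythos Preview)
Your proof is correct and follows essentially the same approach as the paper: differentiate $H_n(t)$ under the integral sign, use $\partial_t W_t = (x^2+y^2)W_t$, kill the $\dot{\mathbb{Q}}_n$ terms by orthogonality (since monicity forces $\deg \dot{\mathbb{Q}}_n < n$), and then iterate the three term relations \eqref{TTRmonict} to identify the coefficient of $\mathbb{Q}_n$ in $(x^2+y^2)\mathbb{Q}_n$ as $V_{n+1}(t)$. Your write-up simply makes explicit the two-step iteration and the edge case $n=0$ that the paper leaves to the reader.
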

\begin{proof} Since 
$
\dot{W_t}(x,y) =  (x^2 + y^2)W_t(x,y),
$
we can write
\begin{align*}
\dot{H}_n(t) = & 
\iint_{-\infty}^{+\infty} \dot{\mathbb{Q}}_n(t) \,\mathbb{Q}_{n}^T(t)\,    W_t(x,y)\, dx\, dy +
\iint_{-\infty}^{+\infty} \mathbb{Q}_n(t) \,\dot{\mathbb{Q}}_{n}^T(t)\,    W_t(x,y)\, dx\, dy \\ 
& 
+ \iint_{-\infty}^{+\infty} \mathbb{Q}_n(t) \,\mathbb{Q}_{n}^T(t)\,    (x^2 +y^2) W_t(x,y)\, dx\, dy.
\end{align*}
Notice that $\deg \dot{\mathbb{Q}}_n(t) < n$, hence, using the orthogonality, and the three term relations \eqref{TTRmonict}, we get the result.
\end{proof}

Now, we define the matrices
\begin{equation} \label{E=E+E}
\textbf{E}_n(t) = 
 E_{n,1}(t) + E_{n,2}(t), \quad n \geqslant 1.
\end{equation} 
We can prove that the matrices  $\textbf{E}_{n}(t)$ satisfy a two dimension version of the Langmuir lattice.

\begin{theorem}
The matrices $\textbf{E}_{n}(t)$ satisfy the 2D Langmuir lattice
\begin{equation}\label{E_dot}
\dot{\textbf{E}}_{n}(t) = V_{n+1}(t) \textbf{E}_{n}(t) -  \textbf{E}_{n}(t) V_{n}(t), \quad n \geqslant 1,
\end{equation}
where $V_{n}(t)$ is given in \eqref{V_n}.
\end{theorem}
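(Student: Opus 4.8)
The plan is to differentiate the defining relation $E_{n,i}(t)H_{n-1}(t)=H_n(t)L_{n-1,i}^T$ from \eqref{EHHE} with respect to $t$, feed in the expression $\dot H_n = V_{n+1}H_n$ from Lemma \ref{dHVH}, and then solve for $\dot E_{n,i}$ before summing over $i=1,2$. First I would write, for each $i=1,2$,
\begin{equation*}
\dot E_{n,i}(t)\,H_{n-1}(t) + E_{n,i}(t)\,\dot H_{n-1}(t) = \dot H_n(t)\,L_{n-1,i}^T,
\end{equation*}
since $L_{n-1,i}$ is a constant matrix. Substituting $\dot H_{n-1}=V_n H_{n-1}$ and $\dot H_n = V_{n+1}H_n$ and using \eqref{EHHE} once more to rewrite $H_n L_{n-1,i}^T = E_{n,i}H_{n-1}$ gives
\begin{equation*}
\dot E_{n,i}(t)\,H_{n-1}(t) = V_{n+1}(t)\,E_{n,i}(t)\,H_{n-1}(t) - E_{n,i}(t)\,V_n(t)\,H_{n-1}(t).
\end{equation*}
Since $H_{n-1}(t)$ is symmetric positive definite, hence invertible, I can cancel it on the right to obtain $\dot E_{n,i}(t) = V_{n+1}(t)E_{n,i}(t) - E_{n,i}(t)V_n(t)$ for each $i$, and then add the $i=1$ and $i=2$ equations; by the definition \eqref{E=E+E} of $\textbf{E}_n(t)$ this yields exactly \eqref{E_dot}.

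The only genuine subtlety is the bookkeeping at the edge of the recursion: the matrices $E_{n,i}$ are rectangular (size $(n+1)\times n$), so one must check that every product above is conformable and that no term is spuriously annihilated — in particular that $V_n$ and $V_{n+1}$ act on the correct sides with the correct sizes ($V_{n+1}$ is $(n+1)\times(n+1)$, $V_n$ is $n\times n$), which is precisely what makes $V_{n+1}\textbf{E}_n - \textbf{E}_n V_n$ a well-defined $(n+1)\times n$ matrix. I would also note the base case $n=1$, where $\mathbb{Q}_{-1}=0$ forces the relevant lower-index terms in $V_1$ to vanish, so the formula still makes sense.

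The main obstacle — really the only place where care is needed rather than routine manipulation — is justifying the cancellation of $H_{n-1}(t)$ and confirming that Lemma \ref{dHVH} applies at index $n-1$ as well as $n$ (it does, for $n\geqslant1$, since the lemma is stated for all $n\geqslant 0$). Everything else is a direct product-rule computation combined with the two identities \eqref{EHHE} and $\dot H_n = V_{n+1}H_n$ already established in the excerpt.
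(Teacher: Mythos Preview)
Your proof is correct and follows essentially the same route as the paper's: differentiate \eqref{EHHE}, insert $\dot H_m = V_{m+1}H_m$ from Lemma~\ref{dHVH}, reuse \eqref{EHHE} to convert $H_n L_{n-1,i}^T$ back into $E_{n,i}H_{n-1}$, and cancel the invertible factor $H_{n-1}$. The only cosmetic difference is that you cancel $H_{n-1}$ first and then sum over $i$ (thereby establishing the slightly stronger fact that each $E_{n,i}$ individually satisfies $\dot E_{n,i}=V_{n+1}E_{n,i}-E_{n,i}V_n$), whereas the paper sums over $i$ first and cancels afterward; the underlying argument is identical.
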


\begin{proof}
From \eqref{EHHE} we can write
$ \dot{H}_n(t) L_{n-1,i}^{T} = \dot{E}_{n,i}(t) H_{n-1}(t) + E_{n,i}(t) \dot{H}_{n-1}(t), $ for $i=1,2$,
hence
$$
\dot{H}_n(t) [ L_{n-1,1}^{T} + L_{n-1,2}^{T} ]= [\dot{E}_{n,1}(t) +\dot{E}_{n,2}(t)] H_{n-1}(t) + [E_{n,1}(t)+E_{n,2}(t)] \dot{H}_{n-1}(t).
$$
Using Lemma \ref{dHVH} and definition \eqref{E=E+E}, we get
\begin{align*}
 V_{n+1}(t) H_{n}(t) [ L_{n-1,1}^{T} + L_{n-1,2}^{T} ] = & \ \dot{\textbf{E}}_{n}(t) H_{n-1}(t) + \textbf{E}_{n}(t)  V_{n}(t) H_{n-1}(t),
\end{align*}
hence, using \eqref{EHHE},
\begin{align*}
\dot{\textbf{E}}_{n}(t) H_{n-1}(t) =  & \ V_{n+1}(t) [ E_{n,1}(t) + E_{n,2}(t)]  H_{n-1}(t) - \textbf{E}_{n}(t) V_{n}(t) H_{n-1}(t)  .
\end{align*}
Since $H_{n-1}(t)$ is a non-singular matrix, we obtain the result.
\end{proof}

Relation \eqref{E_dot} can be seen as a formal type of 2D Langmuir lattice for the matrix coefficients of the three term relation for the monic orthogonal   polynomials. The coefficient matrices $\textbf{E}_{n}(t)$ play the same role as the coefficients $\beta_{n}(t)$ of the univariate case  \eqref{Lang_one_v}.

\medskip

Now, we return to orthonormal polynomial systems. Since $H_n(t)$ is symmetric and positive definite, there exists another symmetric and positive definite matrix $H_{n}^{1/2}(t)$, the
so-called \emph{square root} of the matrix $H_{n}(t)$ \cite[p. 440]{HJ85} such that
$
H_{n}(t) = H_{n}^{1/2}(t)\,H_{n}^{1/2}(t).
$
Let us define the polynomial system $\{\mathbb{P}_n(t)\}_{n\geqslant0}$ by means of
$$
\mathbb{P}_n(t) =  H_{n}^{-1/2}(t)\, \mathbb{Q}_n(t), \quad n\geqslant 0.
$$
Since
\begin{align*}
&  (\mathbb{P}_n(t), \mathbb{P}_n(t)^T) = (H_{n}^{-1/2}(t) \mathbb{Q}_n(t), \mathbb{Q}_n(t)^T H_{n}^{-1/2}(t))  = I_{n+1}, \\
&  (\mathbb{P}_n(t), \mathbb{P}_m(t)^T) = (H_{n}^{-1/2}(t) \mathbb{Q}_n(t), \mathbb{Q}_m(t)^T H_{m}^{-1/2}(t))=\mathtt{0}, 
\end{align*}
then $\{\mathbb{P}_n(t) \}_{n \geqslant 0}$ is an orthonormal polynomial system with respect to \eqref{ip_t}, and satisfy the three term relations \eqref{TTR-O}, where the matrices $A_{n,i} = A_{n,i}(t)$ also depend on $t$, for $n\geqslant0$. 

The matrices involved in the respective three term relations \eqref{TTR-O} and \eqref{TTRmonict} are related by 
\begin{equation*} 
A_{n,i}(t) = H_{n}^{1/2}(t) E_{n+1,i}^{T}(t)  H_{n+1}^{-1/2}(t).
\end{equation*}
Then,
\begin{equation} \label{At=HEH}
\textbf{A}_n^{T}(t) = H_{n+1}^{-1/2}(t) \textbf{E}_{n+1}(t) H_{n}^{1/2}(t), \quad n \geqslant 0,
\end{equation}
where $\textbf{A}_n(t) = A_{n,1}(t) + A_{n,2}(t)$. Deriving \eqref{At=HEH} with respect to $t$, and omitting the parameter $t$ for simplicity, we get
$$
\dot{\textbf{A}}_n^{T} = \dot{H}_{n+1}^{-1/2} \textbf{E}_{n+1} H_{n}^{1/2} +
H_{n+1}^{-1/2} \dot{\textbf{E}}_{n+1} H_{n}^{1/2} +
H_{n+1}^{-1/2} \textbf{E}_{n+1} \dot{H}_{n}^{1/2}.
$$ 
Let us analyse term by term. From \eqref{E_dot} and \eqref{At=HEH}, we obtain
\begin{align}
H_{n+1}^{-1/2} \dot{\textbf{E}}_{n+1} H_{n}^{1/2} 
& = H_{n+1}^{-1/2} [V_{n+2}\textbf{E}_{n+1} - \textbf{E}_{n+1}V_{n+1}] H_{n}^{1/2} \nonumber \\
& = H_{n+1}^{-1/2} V_{n+2} H_{n+1}^{1/2} \textbf{A}_n^{T} 
- \textbf{A}_n^{T} H_{n}^{-1/2} V_{n+1} H_{n}^{1/2}.\label{HEdotH}
\end{align}
Using the definition of $V_{n+1}$ and \eqref{At=HEH}, we have
$$
H_{n}^{-1/2} V_{n+1} H_{n}^{1/2}
= A_{n,1} A_{n,1}^{T} + A_{n,2} A_{n,2}^{T} + A_{n-1,1}^{T} A_{n-1,1}
  + A_{n-1,2}^{T} A_{n-1,2}.
$$
Substituting this relation in \eqref{HEdotH} we get 
\begin{align*} 
H_{n+1}^{-1/2} \dot{\textbf{E}}_{n+1} H_{n}^{1/2} 
 = & [A_{n+1,1} A_{n+1,1}^{T} + A_{n+1,2} A_{n+1,2}^{T} + A_{n,1}^{T} A_{n,1} + A_{n,2}^{T} A_{n,2}] \textbf{A}_n^{T} \\
& - \textbf{A}_n^{T} [A_{n,1} A_{n,1}^{T} + A_{n,2} A_{n,2}^{T} + A_{n-1,1}^{T} A_{n-1,1}
  + A_{n-1,2}^{T} A_{n-1,2}].
\end{align*}

Therefore,
\begin{align*} 
\dot{\textbf{A}}_n^{T} 
 = & [A_{n+1,1} A_{n+1,1}^{T} + A_{n+1,2} A_{n+1,2}^{T} + A_{n,1}^{T} A_{n,1} + A_{n,2}^{T} A_{n,2} ] \textbf{A}_n^{T}  \\
& - \textbf{A}_n^{T} [A_{n,1} A_{n,1}^{T} + A_{n,2} A_{n,2}^{T} + A_{n-1,1}^{T} A_{n-1,1}  + A_{n-1,2}^{T} A_{n-1,2}] \\
&  + \dot{H}_{n+1}^{-1/2} \textbf{E}_{n+1} H_{n}^{1/2} +
H_{n+1}^{-1/2} \textbf{E}_{n+1} \dot{H}_{n}^{1/2}. 
\end{align*}

From \eqref{At=HEH}, we get $\textbf{E}_{n+1} H_{n}^{1/2} = H_{n+1}^{1/2} \textbf{A}_n^T $ and $H_{n}^{-1/2} \textbf{E}_{n+1} = \textbf{A}_n^T H_{n}^{-1/2} $. Even, $H_{n}^{-1/2} \dot{H}_{n}^{1/2} = - \dot{H}_{n}^{-1/2} H_{n}^{1/2}$, and then
\begin{align} 
\dot{\textbf{A}}_n^{T} 
 = &  [A_{n+1,1} A_{n+1,1}^{T} + A_{n+1,2} A_{n+1,2}^{T}] \textbf{A}_n^{T}   - \textbf{A}_n^{T} [A_{n-1,1}^{T} A_{n-1,1} +  A_{n-1,2}^{T} A_{n-1,2}] \nonumber \\
&  + [ A_{n,1}^{T} A_{n,1} + A_{n,2}^{T} A_{n,2} + \dot{H}_{n+1}^{-1/2} H_{n+1}^{1/2} ] \textbf{A}^T_{n}   \label{LL_A} \\
& - \textbf{A}_n^T [A_{n,1} A_{n,1}^{T} + A_{n,2} A_{n,2}^{T} - \dot{H}_{n}^{-1/2} H_{n}^{1/2}]. \nonumber 
\end{align}

Relation \eqref{LL_A} can be seen as a formal type of 2D Langmuir lattice for the matrix coefficients of the three term relation of the orthonormal centrally symmetric polynomials.

\medskip

\noindent {\bf Acknowledgements.} The authors are grateful to the anonymous referee for the constructive comments, suggestions and to point out several references, that helped to improve the manuscript.


\end{document}